\theoremstyle{plain}
\newtheorem*{corollary}{Corollary}
\newtheorem{lemma}{Lemma}
\newtheorem{theorem}{Theorem}
\newtheorem*{conjecture}{Conjecture}
\theoremstyle{remark}
\newtheorem*{remark}{Remark}
\theoremstyle{definition}
\newtheorem{example}{Example}
\DeclareMathOperator{\Id}{Id}
\DeclareMathOperator{\id}{id}
\DeclareMathOperator{\ad}{ad}
\DeclareMathOperator{\height}{ht}
\DeclareMathOperator{\Hom}{Hom}
\DeclareMathOperator{\End}{End}
\DeclareMathOperator{\sign}{sign}
\DeclareMathOperator{\Alt}{Alt}
\DeclareMathOperator{\Ann}{Ann}
\begin{document}

\title[Codimensions of representations of Lie algebras]{Codimensions of
polynomial identities of representations of Lie algebras}

\author{A.\,S.~Gordienko}

\address{Memorial University of Newfoundland, St. John's, NL, Canada}
\email{asgordienko@mun.ca}
\keywords{Lie algebra, polynomial identity, codimension, cocharacter, symmetric group,
Young diagram}

\begin{abstract} Consider a representation $\rho\colon L \to \mathfrak{gl}(V)$
where $L$ is a Lie algebra and $V$ is a finite dimensional vector space.
We prove the analog of Amitsur's conjecture
on asymptotic behavior
for codimensions of polynomial identities of $\rho$.
\end{abstract}

\subjclass[2010]{Primary 17B01; Secondary 16R10, 17B10, 20C30.}
\thanks{Supported by post doctoral fellowship
from Atlantic Association for Research
in Mathematical Sciences (AARMS), Atlantic Algebra Centre (AAC),
Memorial University of Newfoundland (MUN), and
Natural Sciences and Engineering Research Council of Canada (NSERC)}

\medskip

\maketitle

\section{Introduction}

In the 80's, conjectures about the asymptotic behaviour
of codimensions of polynomial identities were made
by S.A.~Amitsur and A.~Regev.
Amitsur's conjecture was proved in 1999 by
A.~Giambruno and M.V.~Zaicev~\cite[Theorem~6.5.2]{ZaiGia}
 for associative algebras and in 2002 by M.V.~Zaicev~\cite{ZaiLie}
 for finite dimensional Lie algebras.
  Alongside with polynomial
identities of algebras, polynomial identities of representations are
also important~\cite{Razmyslov, VovsiPlotkin}.
Therefore the question arises whether the conjectures
hold for codimensions of representations.

Denote by $F \left<  X \right> $ the free associative
 algebra on the countable set $X = \{ x_1,
x_2, \ldots \}$,  i.e. the algebra of polynomials
without a constant term in the noncommuting variables $X$
over a field $F$ of characteristic~$0$.
 Let $\rho \colon L \to \mathfrak{gl}(V)$ be a linear representation
 of a Lie algebra $L$ on a
 vector space $V$ over $F$.  Let
$f=f(x_1,\ldots,x_n) \in F\left<  X \right>$.
We say that $f$ is
a \textit{polynomial identity} of $\rho$ if $f(\rho(a_1), \ldots, \rho(a_n))v=0$
for all $a_1, \ldots, a_n \in L$, $v \in V$.
  The set $\Id(\rho)$ of polynomial identities of $\rho$ is
a two-sided ideal of $F \left<  X \right>$.

\begin{example}
Let $\rho \colon \mathfrak{sl}_2(F) \to \mathfrak{gl}(V)$
be any representation of the algebra $\mathfrak{sl}_2(F)$.
Then $$St_4=\sum_{\sigma \in S_4} (\sign \sigma)x_{\sigma(1)} x_{\sigma(2)}
x_{\sigma(3)} x_{\sigma(4)} \in \Id(\rho)$$
where $S_4$ is the symmetric group on $\lbrace 1, 2, 3, 4\rbrace$.
Indeed, in order to verify a multilinear identity
it is sufficient to substitute basis elements.
Since $\dim\mathfrak{sl}_2(F)=3$ and $St_4$
is alternating in $4$ variables, it vanishes under all
evaluations in $\rho(\mathfrak{sl}_2(F))$.
\end{example}

Let $P_n$ be the space of multilinear polynomials
in the noncommuting variables $x_1$, $x_2$, \ldots,
$x_n$. The non-negative integer
 $c_n(\rho) = \dim \frac {P_n}{P_n \cap \Id(\rho)}$
is called the $n$th {\itshape codimension} of the representation~$\rho$.

Another approach to polynomial identities of representations is concerned with representations of Lie algebras in associative algebras.
 Let $\tau \colon L \to A_0$ be a Lie homomorphism where $L$ is a Lie $F$-algebra and $A_0$ is an associative
$F$-algebra. Let
$f=f(x_1,\ldots,x_n) \in F\left<  X \right>$.
We say that $f$ is
a \textit{polynomial identity} of $\tau$ if $f(\tau(a_1), \ldots, \tau(a_n))=0$
for all $a_1$, \ldots, $a_n \in L$.
  Again, the set $\Id(\tau)$ of polynomial identities of $\tau$ is
a two-sided ideal of $F \left<  X \right>$.
The non-negative integer
 $c_n(\tau) = \dim \frac {P_n}{P_n \cap \Id(\tau)}$
is called the $n$th {\itshape codimension} of the representation~$\tau$.

These approaches are equivalent. The first one is a partial case of the second one with $A_0 = \End_F(V)$
and $\tau = \rho$.
The second one is a partial case of the first one since $\Id(\tau) = \Id(\rho)$ where
$\rho \colon L \to \mathfrak{gl}_F (A_0 + F \cdot 1)$,
 $\rho(a) \cdot b=\tau(a)b$, $a \in L$, $b \in A_0 + F \cdot 1$.

 Throughout the paper we use the first approach.

Yu.\,A. Bahturin has suggested to consider the analog of S.\,A.~Amitsur's conjecture
for representations.

\begin{conjecture} There exists
 $\lim\limits_{n\to\infty} \sqrt[n]{c_n(\rho)} \in \mathbb Z_+$.
\end{conjecture}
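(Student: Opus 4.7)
The plan is to adapt the approach of Giambruno--Zaicev for associative algebras and of Zaicev for finite-dimensional Lie algebras to the setting of representations. Since $\dim V < \infty$, we have $\Id(\rho) = \Id(\bar\rho)$ for the induced injection $\bar\rho \colon L/\ker\rho \hookrightarrow \mathfrak{gl}(V)$, so we may assume from the outset that $L$ is finite-dimensional and $\rho$ is faithful. Let $A \subseteq \End_F(V)$ be the associative envelope of $\rho(L)$; by Wedderburn--Malcev, $A = B \oplus J$ where $J = J(A)$ is nilpotent with $J^{p+1}=0$, and $B = B_1 \oplus \cdots \oplus B_s$ is semisimple with simple blocks $B_i$. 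In parallel, apply Levi's theorem to $L$: $L = S \oplus R$ with $S$ a Levi subalgebra and $R$ the solvable radical; after a suitable choice of the Wedderburn--Malcev splitting one may arrange $\rho(S) \subseteq B$.

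The candidate PI-exponent is
\[
d(\rho) := \max\bigl\{\dim B_{i_1} + \cdots + \dim B_{i_k} : B_{i_1} J B_{i_2} J \cdots J B_{i_k} \neq 0\bigr\},
\]
taken over $k$-tuples of distinct indices for which each $B_{i_j}$ is ``reachable'' by multilinear Lie evaluations in $\rho(L)$ modulo $J$; the precise description of this invariant is itself part of the proof. For the upper bound, decompose the $n$th cocharacter $\chi_n(\rho) = \sum_{\lambda \vdash n} m_\lambda(\rho)\,\chi_\lambda$ into irreducibles of $S_n$. Via a multialternation argument one shows that $m_\lambda(\rho) = 0$ whenever $\lambda$ lies outside a hook strip determined by $d(\rho)$ and $p$: after the Peirce decomposition of $A$ with respect to the central idempotents of $B$, a multilinear polynomial alternating in more than $d(\rho)$ variables is forced to place more than $p$ factors inside $J$, which vanishes by nilpotency. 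Hook-length asymptotics then give $\limsup \sqrt[n]{c_n(\rho)} \leq d(\rho)$.

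For the lower bound, fix a sequence $B_{i_1}, \ldots, B_{i_k}$ and elements $r_1, \ldots, r_{k-1} \in J$ realizing $d := d(\rho)$ with $B_{i_1}r_1 \cdots r_{k-1} B_{i_k} \neq 0$, and construct, following Regev, a family of multialternating polynomials whose monomials mimic this path using matrix unit bases of the $B_{i_j}$. Show that these polynomials can be evaluated nontrivially on elements of $\rho(L)$; combined with character-theoretic lower bounds on cocharacter degrees this yields $\liminf \sqrt[n]{c_n(\rho)} \geq d$, and together with the upper bound this proves the conjecture with $\lim \sqrt[n]{c_n(\rho)} = d(\rho) \in \mathbb{Z}_+$.

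The main obstacle is the lower bound construction: one must verify that iterated Lie brackets and products of elements of $\rho(L)$ can realize a basis of matrix units in each reached $B_{i_j}$ together with the connecting elements $r_j \in J$. This reduces to the Jacobson density theorem applied to the action of the Levi subalgebra $\rho(S)$ on $V$, combined with a careful Peirce-style analysis of the interaction between $R$, $J$, and the simple blocks $B_i$; in particular one must check that the associative generators of each $B_{i_j}$ can be expressed as multilinear Lie-polynomial evaluations on $\rho(L)$, not merely as elements of $A$.
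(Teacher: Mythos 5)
Your overall architecture (reduce to $F$ algebraically closed and $L$ finite-dimensional, decompose $A$ via Wedderburn--Malcev and $L$ via Levi, bound cocharacter multiplicities to get the upper bound, build multialternating nonidentities for the lower bound, pin down an integer invariant $d$ in the middle) parallels the paper. However, your candidate invariant is wrong in a way that is not fixable by tweaking the ``reachability'' caveat, and this breaks both halves of the argument.

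You propose $d(\rho) = \max\bigl\{\dim B_{i_1} + \cdots + \dim B_{i_k} : B_{i_1}J\cdots J B_{i_k}\ne 0\bigr\}$ with $B_i$ the simple associative blocks of $A/J(A)$. That is the Giambruno--Zaicev exponent of the \emph{associative algebra} $A$, not of the representation~$\rho$, and for representations it overshoots. The point is that in $c_n(\rho)$ one substitutes only from the Lie subspace $\rho(L)\subseteq A$, so in fact $c_n(\rho)\leqslant (\dim\rho(L))^n\cdot\dim A$ and the exponent cannot exceed $\dim\rho(L)$, which is typically far below $\dim A$. A concrete counterexample: take $\rho$ the adjoint representation of $\mathfrak{sl}_2$, so $\rho(L)\cong\mathfrak{sl}_2$ is $3$-dimensional, $V$ is $3$-dimensional, and (by Burnside) $A=M_3(F)$ with $J(A)=0$ and a single block $B_1=M_3(F)$. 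Your formula gives $d=9$, but the true exponent is $3$. Accordingly, your upper-bound argument fails: alternating in more than $9$ variables is never triggered, while alternating in more than $3$ variables from the $3$-dimensional $\rho(L)$ already kills everything. The paper's invariant is instead a \emph{Lie-algebraic} dimension, $d=\max\dim\bigl(\rho(L)/(\rho(L)\cap\bigcap_k\Ann(I_k/J_k))\bigr)$ over chains of $A$-module subquotients $I_k/J_k$ (irreducible or $1$-dimensional) subject to a nonvanishing connectivity condition on $G$-module complements $T_k$; by the paper's Lemmas this quotient decomposes as $\bigoplus_k(G_k\oplus\langle a_k\rangle_F)$, a sum of reductive Lie pieces of total dimension $\leqslant\dim\rho(L)$, which equals $3$ in the example above.

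The same mismatch infects your lower bound: you propose to realize alternating polynomials via matrix-unit bases of $B_{i_j}$, which would require filling $(\dim B_{i_j})$-many alternated slots with elements of $\rho(L)$ landing in distinct matrix units -- impossible when $\dim\rho(L)<\dim B_{i_j}$. What actually works (and what the paper does) is to apply Razmyslov's theorem for an embedding of a \emph{reductive Lie algebra} $G_k\oplus\langle c_k\rangle_F$ into $\End_F(T_k)$: this yields central-valued multialternating polynomials whose alternation width is $\dim(G_k\oplus\langle c_k\rangle_F)$, not $\dim\End_F(T_k)$. Gluing these along the chain via the elements $q_i$ is what produces the nonidentity. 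Your instinct that the ``main obstacle'' is realizing generators by Lie evaluations is pointing at the right difficulty, but the resolution is to change the invariant and use Razmyslov's representation-theoretic alternator rather than to force the associative blocks to be hit.
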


\begin{theorem}\label{TheoremMain}
Let $\rho \colon L \to \mathfrak{gl}(V)$ be a
representation of a Lie algebra $L$ on a finite dimensional vector space $V$
over a field $F$ of characteristic $0$. Then either
there exist constants $C_1, C_2 > 0$, $r_1, r_2 \in \mathbb R$, $d \in \mathbb N$
such that $C_1 n^{r_1} d^n \leqslant c_n(\rho) \leqslant C_2 n^{r_2} d^n$
for all $n \in \mathbb N$ or the equality $c_n(\rho)=0$
holds for all sufficiently large $n$.
\end{theorem}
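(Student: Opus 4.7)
The plan is to adapt the strategy of Giambruno--Zaicev and Zaicev to the representation setting, bearing in mind that $\Id(\rho)$ is in general only a two-sided ideal of $F\langle X \rangle$ rather than a T-ideal. First I pass to a finite-dimensional picture: since $V$ is finite-dimensional, the unital associative subalgebra $A \subseteq \End_F(V)$ generated by $\rho(L)$ is finite-dimensional, and by the identification noted in the introduction, $\Id(\rho) = \Id(\tau)$ for the Lie homomorphism $\tau\colon \rho(L) \hookrightarrow A$. Thus I may work with a pair $(A,L')$ where $L' := \rho(L)$ is a Lie subalgebra of $A^{(-)}$ generating $A$ associatively.

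Next I exploit the Wedderburn--Malcev decomposition $A = B \oplus J$, where $J = J(A)$ is the Jacobson radical and $B = B_1 \oplus \dots \oplus B_q$ is a maximal semisimple subalgebra with each $B_i$ simple over $F$. Using Levi's theorem, which applies since $\mathrm{char}\,F = 0$, I decompose $L'$ compatibly with this splitting and observe that $L' \cap J$ is a nilpotent Lie ideal. Following Giambruno--Zaicev, the integer $d$ is defined as the maximum of $\dim_F(B_{i_1} \oplus \dots \oplus B_{i_k})$ taken over tuples $(i_1,\dots,i_k)$ for which $B_{i_1} J B_{i_2} J \cdots J B_{i_k} \neq 0$ and whose simple blocks and bridging radical elements are \emph{realizable} by Lie-polynomials in $L'$. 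If no such tuple exists, then every element of $L'$ is nilpotent in $A$, so some fixed power of associative monomials already vanishes on $L'$, yielding the second alternative $c_n(\rho) = 0$ for all large $n$.

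For the upper bound I use cocharacter theory: decomposing $P_n/(P_n \cap \Id(\rho))$ as an $S_n$-module, write its character as $\sum_{\lambda \vdash n} m_\lambda \chi^\lambda$. A Kemer--Zubrilin-style alternation argument combined with nilpotency of $J$ forces $m_\lambda = 0$ unless the Young diagram $\lambda$ lies inside a hook of horizontal width $d$ and some bounded vertical leg depending only on the nilpotency index of $J$; Regev's hook-length estimates then yield $c_n(\rho) \leqslant C_2 n^{r_2} d^n$. For the lower bound I construct explicit multilinear polynomials by alternating $d$ variables corresponding to a basis of the optimal $B_{i_1} \oplus \dots \oplus B_{i_k}$ and interspersing them with bridging elements from $L' \cap J$; evaluating on suitably chosen matrix units shows these polynomials are not identities, giving $c_n(\rho) \geqslant C_1 n^{r_1} d^n$.

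The principal obstacle is precisely that $\Id(\rho)$ is only closed under substitutions $x_i \mapsto \ell_i \in L'$, not under substitutions by arbitrary associative expressions. Consequently, the standard constructions of Giambruno--Zaicev, in which one freely substitutes elements of $A$, must be refined to ensure that each alternating element and each bridging radical element is either in $L'$ itself or can be produced as an iterated commutator of elements of $L'$. Carrying this out within the framework of Razmyslov's weak polynomial identities of pairs $(A,L)$ is where the core technical work of the proof will lie.
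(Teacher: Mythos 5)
There is a genuine gap, and it lies exactly where you flag the "principal obstacle" but then leave it unresolved: the parameter $d$ that you define does not control $c_n(\rho)$. You set $d$ to be the maximal $\dim_F(B_{i_1}\oplus\dots\oplus B_{i_k})$ over admissible chains $B_{i_1}J\cdots J B_{i_k}\neq 0$, i.e.\ a quantity computed inside the simple components of $A$, subject to a vague "realizability by Lie polynomials." But any Lie polynomial evaluated on $L'$ lands back in $L'=\rho(L)$, so the number of variables you can genuinely alternate over linearly independent substitutions is bounded by dimensions inside $\rho(L)$, not by $\dim B_{i_1}\oplus\dots\oplus B_{i_k}$. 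A concrete test case: $\rho\colon\mathfrak{sl}_2 \hookrightarrow M_2(F)$, the natural $2$-dimensional representation. Here $A=M_2(F)$ is simple, $\dim A=4$, and $A$ is associatively generated by $\rho(L)$, so your chain $\{B_1\}=\{M_2\}$ is admissible and realizable, giving $d=4$. But the exponent of the pair $(\mathfrak{sl}_2, M_2)$ is $3=\dim\mathfrak{sl}_2$, since the upper bound is forced by the cocharacter argument: no column of a Young diagram can exceed $\dim\rho(L)$, because you only substitute elements of $\rho(L)$. The correct quantity, used in the paper, is the maximum of $\dim\bigl(\rho(L)\big/\bigl(\rho(L)\cap\bigcap_k\Ann(I_k/J_k)\bigr)\bigr)$ over chains of irreducible (or one-dimensional) $A$-subquotients $I_k/J_k$ of $A$ satisfying the nonvanishing product condition. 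This is a quotient of the Lie algebra by annihilators, not a sum of simple blocks of $A$. Your definition systematically overshoots, so the claimed upper bound $c_n(\rho)\leqslant C_2 n^{r_2}d^n$ is not in jeopardy, but the matching lower bound with the same $d$ would be false.

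The lower bound is also missing its key ingredient. You propose to alternate over a basis of $B_{i_1}\oplus\dots\oplus B_{i_k}$ and evaluate on matrix units, but matrix units are not in $\rho(L)$ and cannot be produced by Lie polynomials in $\rho(L)$ in general, so this does not give polynomials outside $\Id(\rho)$. The construction the paper uses instead is Razmyslov's theorem: for each block, take a faithful irreducible module for a reductive Lie algebra $G_i\oplus\langle c_i\rangle$ (an ideal of the Levi factor plus a diagonalizable piece of the radical, extracted via Lemmas on the Levi decomposition of $\rho(L)$, the structure of $R$ modulo $\rho(L)\cap J(A)$, and a Jordan-type decomposition inside $A$), which yields a central multilinear polynomial \emph{in Lie algebra variables}. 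These central polynomials, glued by bridging elements and then alternated across blocks, give nonidentities with the right alternation pattern. Without an argument of this type, the "core technical work" you identify is not merely a refinement of Giambruno--Zaicev; it requires a different definition of $d$ and a different construction, and both are where your proposal stops short.
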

\begin{corollary}
The analog of Amitsur's conjecture holds for such representations.
\end{corollary}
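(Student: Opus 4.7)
The plan is to follow the strategy of Giambruno--Zaicev for associative algebras and Zaicev for Lie algebras, suitably adapted to representations. First I would replace $L$ by $L/\ker\rho$ and view $\bar L := \rho(L)$ as a Lie subalgebra of the finite-dimensional associative algebra $A \subseteq \End_F(V)$ generated by $\rho(L) \cup \{1_V\}$. By the Wedderburn--Mal'cev theorem, decompose $A = \tilde B \oplus J$ (as vector spaces), where $\tilde B = B_1 \oplus \cdots \oplus B_q$ is a maximal semisimple subalgebra and $J = J(A)$ is nilpotent of some index $p$. Throughout the argument I would analyze the $S_n$-cocharacter $\chi_n(\rho) = \sum_{\lambda \vdash n} m_\lambda \chi^\lambda$ of the quotient $P_n/(P_n \cap \Id(\rho))$, exploiting the $S_n$-action by permutation of variables.

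For the upper bound $c_n(\rho) \leqslant C_2 n^{r_2} d^n$, the key step is to show that $m_\lambda = 0$ unless the Young diagram $\lambda$ fits into a hook whose arm and leg are determined by $\dim \tilde B$ and $p$. This is forced by the nilpotency of $J$: after any evaluation in $\bar L$, a multilinear polynomial alternating in too many variables yields monomials in which at least $p$ factors lie in $J$, which then vanish. Together with a polynomial bound on the individual multiplicities $m_\lambda$, obtained by a counting argument on admissible tableaux, the Amitsur--Regev estimate on hook-supported characters gives the desired exponential bound with $d$ an explicit integer read off from the structure of $A$.

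For the matching lower bound $c_n(\rho) \geqslant C_1 n^{r_1} d^n$, I would identify a configuration witnessing $d$: a sequence $B_{i_1}, \ldots, B_{i_s}$ of simple components of $\tilde B$ together with radical elements $w_1, \ldots, w_{s-1} \in J$ such that the product $B_{i_1} w_1 B_{i_2} \cdots w_{s-1} B_{i_s}$ acts nontrivially on $V$, chosen so as to maximize $\sum_j \dim B_{i_j}$; this maximum is the claimed $d$. Using Weyl's theorem on complete reducibility and the Levi decomposition of $L$, I would lift enough elements of each $B_{i_j}$ back into $\bar L$ in order to construct a multilinear polynomial alternating in $s$ disjoint sets of variables whose evaluation on the lifted elements (with fixed ``connector'' arguments representing the $w_k$) is nonzero on a chosen vector of $V$. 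A Young symmetrizer argument then converts these explicit non-identities into a lower bound on $\dim P_n / (P_n \cap \Id(\rho))$ of the required shape. If no such configuration exists even for $s = 1$, every sufficiently long polynomial vanishes on $\rho(L)$ acting on $V$, giving the alternative $c_n(\rho) = 0$ for all large $n$.

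The main obstacle is the Lie-versus-associative tension: polynomials in $P_n$ are evaluated on $\bar L$, which is only a Lie subalgebra of $A$, so one cannot substitute arbitrary elements of $A$ as in the purely associative setting. Producing enough independent lifts of each $B_{i_j}$ to $\bar L$ requires controlling the image of $\bar L$ under the projection $A \to A/J \cong \tilde B$, and this is precisely where one must combine Levi's theorem with Weyl's complete reducibility of the $L$-action on $V$. This step plays the role the Wedderburn--Mal'cev lift plays in Zaicev's proof for finite-dimensional Lie algebras, but with an additional layer of difficulty because $\tilde B$ here is an associative envelope rather than a Lie summand, so the admissible substitutions live in a Lie subalgebra of $\tilde B \oplus J$ whose projection to $\tilde B$ need not be a semisimple Lie ideal.
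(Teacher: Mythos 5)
There is a genuine gap at the heart of your lower‐bound strategy: the exponent you propose is the wrong invariant. You identify $d$ as $\max\sum_j \dim B_{i_j}$, maximized over chains $B_{i_1} w_1 B_{i_2}\cdots w_{s-1}B_{i_s}\ne 0$ of simple associative components of $A/J(A)$. That is the Giambruno--Zaicev formula for the PI‐exponent of the \emph{associative} algebra $A$, and it overshoots for representations: polynomials in $P_n$ may only be evaluated on $\rho(L)$, not on all of $A$, so the number of variables that can be alternated simultaneously is bounded by a Lie‐theoretic dimension inside $\rho(L)$, not by $\dim B_{i_j}$. A concrete counterexample: for $\rho\colon\mathfrak{sl}_2\to\mathfrak{gl}(F^2)$ the standard representation, one has $A=M_2(F)$, a single simple component of dimension $4$, so your $d$ would be $4$; but the actual exponent is $3=\dim\mathfrak{sl}_2$. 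The paper instead defines $d=\max\dim\bigl(\rho(L)/(\rho(L)\cap\Ann(I_1/J_1)\cap\cdots\cap\Ann(I_r/J_r))\bigr)$, a dimension measured inside the Lie algebra $\rho(L)$, and this is the quantity that both bounds actually match.

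You correctly name the Lie-versus-associative obstruction, but you do not resolve it: ``lift enough elements of each $B_{i_j}$ back into $\bar L$'' is not possible when the image of $\bar L$ in a simple component $B_{i_j}$ is a proper Lie subalgebra (as in the example above), and Weyl's complete reducibility plus Levi's theorem do not by themselves produce the needed alternating non-identities. The paper's mechanism is Razmyslov's theorem on reductive Lie algebras: for a reductive $B$ embedded in a simple associative algebra it generates, there is a multilinear polynomial alternating on $k$ sets of $\dim B$ variables that evaluates to a nonzero central element of the image. The reductive algebras it is applied to are $G_k\oplus\langle c_k\rangle_F$, where the $G_k$ are carefully chosen Lie ideals of the Levi factor and the $c_k$ are the semisimple parts in a Jordan-type decomposition of chosen elements of the radical (Lemmas~\ref{LemmaChooseai}--\ref{LemmaciProperties}); this produces alternating blocks of size $d$, not $\dim B_{i_j}$. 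Without this ingredient, the lower bound argument does not close.

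Finally, note that the corollary itself is an immediate consequence of Theorem~\ref{TheoremMain}: the two-sided estimate $C_1 n^{r_1} d^n\leqslant c_n(\rho)\leqslant C_2 n^{r_2} d^n$ forces $\lim_{n\to\infty}\sqrt[n]{c_n(\rho)}=d\in\mathbb Z_+$, and in the degenerate case the limit is $0$. Your sketch is really a plan for the theorem, not the corollary, so the substantive discussion belongs there.
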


Our proof follows the outline of the proof by M.V.~Zaicev~\cite{ZaiLie}.
However, in many cases we need to apply new ideas.

Codimensions of representations do not change upon an extension of the base field.
The proof is analogous to the cases of codimensions of
associative~\cite[Theorem~4.1.9]{ZaiGia} and Lie algebras~\cite[Section~2]{ZaiLie}.
Thus without loss of generality we may assume $F$ to be algebraically closed.

Fix a Levi decomposition $\rho(L)=G+R$ where $G$ is a maximal semisimple
subalgebra of $\rho(L)$ and $R$ is the solvable radical of $\rho(L)$.
Denote by $A$ the associative subalgebra of $\End_F(V)$ generated by $\rho(L)$.
Then $A$ becomes a $G$-module under the natural left action by multiplication.

Consider left associative ideals $I_1$, $I_2$, \ldots, $I_r$,
$J_1$, $J_2$, \ldots, $J_r$, $r \in \mathbb Z_+$, of the algebra $A$ such that $J_k \subseteq I_k$,
satisfying the conditions
\begin{enumerate}
\item $I_k/J_k$ is an irreducible $A$-module or $\dim (I_k/J_k) = 1$;
\item for any $G$-submodules $T_k$
such that $I_k = J_k\oplus T_k$,
 there exist elements $q_1$, \ldots, $q_{r-1} \in A \cup \{1\}$
such that $T_1 q_1 T_2 \ldots T_{r-1} q_{r-1} T_r \ne 0$.
\end{enumerate}

Let $M$ be a left $A$-module. Denote by $\Ann M$ its annihilator in $A$.
Let $$d=d(\rho) = \max \left(\dim \frac{\rho(L)}{\rho(L) \cap \Ann(I_1/J_1) \cap \dots \cap \Ann(I_r/J_r)}\right)$$
where the maximum is found among all $r \in \mathbb Z_+$ and all $I_1, \ldots, I_r$, $J_1, \ldots, J_r$ satisfying Conditions 1--2. We claim that $d(\rho)$ coincides with $d$ from Theorem~\ref{TheoremMain}.

We need several auxiliary lemmas.

\begin{lemma}\label{LemmaLR}
$[\rho(L), R] \subseteq J(A)$ where $J(A)$ is the Jacobson radical of $A$.
\end{lemma}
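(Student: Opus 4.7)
The plan is to identify $J(A)$ concretely via a composition series of $V$ and then show that $[\rho(L),R]$ acts trivially on every composition factor by invoking Lie's theorem.

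First, I would fix a composition series
$$0 = V_0 \subsetneq V_1 \subsetneq \dots \subsetneq V_m = V$$
of $V$ viewed as a left $A$-module, and set
$$N := \{a \in A : a V_i \subseteq V_{i-1} \text{ for all } i\}.$$
This is a two-sided ideal of $A$. Since $N^m V = 0$ and $A \subseteq \End_F(V)$ acts faithfully on $V$, we get $N^m = 0$, so $N$ is a nilpotent ideal and therefore $N \subseteq J(A)$. Conversely, each composition factor $W_i := V_i/V_{i-1}$ is a simple $A$-module and $J(A)$ annihilates every simple $A$-module, so $J(A) V_i \subseteq V_{i-1}$ for each $i$, i.e.\ $J(A) \subseteq N$. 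Hence $J(A) = N$, and to prove the lemma it suffices to verify that $[\rho(L),R]$ acts as zero on each $W_i$.

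Because $A$ is generated as an associative algebra by $\rho(L)$, the $A$-invariant subspaces of $V$ coincide with the $\rho(L)$-invariant ones; therefore each $W_i$ is also irreducible as a module over the Lie algebra $\rho(L)$. Since we may assume $F$ to be algebraically closed of characteristic $0$, the classical consequence of Lie's theorem applies: on the finite-dimensional irreducible $\rho(L)$-module $W_i$ every element of the solvable radical $R$ acts as a scalar, so $\rho(y)|_{W_i} = \lambda_i(y)\,\id_{W_i}$ for some linear functional $\lambda_i : R \to F$ and every $y \in R$. Consequently, for any $x \in \rho(L)$ and $y \in R$,
$$[\rho(x),\rho(y)]|_{W_i} = \bigl[\rho(x)|_{W_i},\ \lambda_i(y)\,\id_{W_i}\bigr] = 0,$$
so $[\rho(L),R]V_i \subseteq V_{i-1}$ for all $i$, meaning $[\rho(L),R] \subseteq N = J(A)$.

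The argument is short and I do not anticipate a real obstacle: the only nonroutine ingredient is the scalar action of $R$ on the simple composition factors, which is a standard corollary of Lie's theorem, while the identification $J(A) = N$ is the usual description of the Jacobson radical of a finite-dimensional associative algebra acting faithfully on a finite-dimensional space.
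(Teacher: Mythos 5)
Your proof is correct, and it takes a genuinely more economical route than the paper's. Both proofs start from the same observation: passing to a composition series of $V$ and showing $[\rho(L),R]$ kills each composition factor. The paper reaches this via E.~Cartan's theorem on the structure of $\varphi_i(\rho(L))$ (reductive, hence $[\rho(L),\rho(L)]\cap R$ maps to zero), whereas you invoke the scalar-action corollary of Lie's theorem on the radical $R$ directly; these are interchangeable standard facts. The real divergence is what happens afterward: the paper only concludes that the \emph{elements} of $[\rho(L),R]$ lower the composition chain, and then still has to run a separate inductive argument to show that the two-sided associative ideal generated by $[\rho(L),R]$ is nilpotent. You short-circuit this by observing that the set $N=\{a\in A : aV_i\subseteq V_{i-1}\}$ is already a two-sided ideal, is nilpotent, contains $J(A)$ (since $J(A)$ annihilates simple modules), and hence equals $J(A)$; once you know $[\rho(L),R]\subseteq N=J(A)$ you are done with no extra induction. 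This is a cleaner proof of the same lemma. One small point worth making explicit: when you assert $J(A)V_i\subseteq V_{i-1}$, you should note that this uses finite-dimensionality of $A$ (guaranteed here since $V$ is finite-dimensional and $A\subseteq\End_F(V)$), so that $J(A)$ is nilpotent and annihilates simple modules in the usual finite-dimensional sense.
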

\begin{proof}
Let $V = V_0 \supseteq V_1 \supseteq V_2 \supseteq \ldots \supseteq V_t = \left\lbrace 0 \right\rbrace$
be a composition chain in $V$.
Then each $V_i/V_{i+1}$ is an irreducible $\rho(L)$-module.
Denote the corresponding homomorphism by $\varphi_i \colon \rho(L) \to
\mathfrak{gl}(V_i/V_{i+1})$. Then by E.~Cartan's theorem~\cite[Proposition~1.4.11]{GotoGrosshans},
$\varphi_i (\rho(L))$ is semisimple or the direct sum of a semisimple ideal and
the center of $\mathfrak{gl}(V_i/V_{i+1})$.
Thus $\varphi_i ([ \rho(L), \rho(L)])$ is semisimple
and $\varphi_i ([ \rho(L), \rho(L)] \cap R) = 0$.
Since $[\rho(L), R] \subseteq [\rho(L), \rho(L)] \cap R$,
we have $\varphi_i([\rho(L), R])=0$ and $[\rho(L), R]V_i \subseteq V_{i+1}$.
Therefore the associative ideal of $A$ generated by $[\rho(L), R]$ is nilpotent
since $a_1 (b_{11} \ldots b_{1k_1}) a_2 (b_{21} \ldots b_{2k_2})
a_3 \ldots a_t$ is a zero operator on $V$
for all  $a_i \in [\rho(L), R]$ and $b_{ij} \in \rho(L)$.
Thus $[\rho(L), R] \subseteq J(A)$.
\end{proof}

\begin{lemma}\label{LemmaRS}
There exists a subspace $S \subseteq R$ such that
$R = S + \rho(L)\cap J(A)$ is
the direct sum of subspaces and $[G,S]=0$.
\end{lemma}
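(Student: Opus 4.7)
The plan is to produce $S$ as a $G$-module complement in $R$ to the subspace $\rho(L)\cap J(A)$, using Weyl's theorem on complete reducibility for the semisimple Lie algebra $G$ acting on $R$ by the adjoint action inherited from $\rho(L)$. The two structural ingredients that make this work are, first, that $\rho(L)\cap J(A)$ actually lies inside $R$ and is a $G$-submodule, and second, that Lemma~\ref{LemmaLR} forces $[G,R]$ into $J(A)$.

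The first step is to verify that $\rho(L)\cap J(A)$ is a Lie ideal of $\rho(L)$, which is immediate from the fact that $J(A)$ is a two-sided associative ideal and $\rho(L)$ is closed under commutators. Because $J(A)$ is nilpotent (since $A$ is finite dimensional), this ideal is a solvable Lie ideal of $\rho(L)$ and therefore must be contained in the solvable radical $R$; in particular $\rho(L)\cap J(A)=R\cap J(A)$, and this subspace is stable under the adjoint action of $G\subseteq\rho(L)$. Weyl's theorem, applied to the finite dimensional $G$-module $R$ over a field of characteristic $0$, then furnishes a $G$-submodule $S$ with $R=S\oplus(\rho(L)\cap J(A))$. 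To finish, I would invoke Lemma~\ref{LemmaLR} to conclude $[G,R]\subseteq[\rho(L),R]\cap R\subseteq R\cap J(A)$; since $S$ is a $G$-submodule of $R$, this gives $[G,S]\subseteq S\cap(\rho(L)\cap J(A))=0$, as required.

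I do not expect a substantial obstacle: once one observes that $\rho(L)\cap J(A)$ is automatically solvable and so sits inside $R$, the argument is a routine combination of Weyl's complete reducibility with the preceding lemma. The only point that genuinely requires attention is the containment $\rho(L)\cap J(A)\subseteq R$, without which the relation $R=S+\rho(L)\cap J(A)$ would not even make sense as an internal direct sum; it is precisely the solvability of $J(A)$ viewed as a Lie algebra that makes this containment automatic.
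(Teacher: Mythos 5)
Your proof is correct and follows essentially the same route as the paper's: take a $G$-module complement to $\rho(L)\cap J(A)$ in $R$ via Weyl's complete reducibility, then use Lemma~\ref{LemmaLR} together with $S$ being a $G$-submodule to force $[G,S]\subseteq S\cap(\rho(L)\cap J(A))=0$. Your explicit verification that $\rho(L)\cap J(A)$ is a nilpotent Lie ideal and hence sits inside $R$ is a detail the paper leaves implicit, and it is a worthwhile addition.
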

\begin{proof}
There is a natural $\ad$-action of $G$
on $R$: $(\ad g)a = [g,a]$, $g \in G$, $a\in R$.
Since $\rho(L)\cap J(A)$ is a submodule of $R$ and
$G$ is semisimple,
there exists a complementary $G$-submodule $S$.
Thus $[G,S] \subseteq S$. In virtue of Lemma~\ref{LemmaLR},
$[G,S] \subseteq [\rho(L),R] \subseteq \rho(L) \cap J(A)$.
Therefore, $[G,S]=0$.
\end{proof}

\begin{lemma}\label{LemmaIrr}
Let $M$ be an irreducible $A$-module.
Then either $M$ is an irreducible $G$-module or
$\dim M = 1$.
Furthermore, $S$ acts on $M$ by scalar operators.
\end{lemma}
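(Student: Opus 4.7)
The plan is to combine Schur's lemma with the commutation estimates supplied by Lemmas~\ref{LemmaLR} and~\ref{LemmaRS}. Since $A$ is finite dimensional (it is generated by $\rho(L) \subseteq \End_F(V)$), the module $M$ is finite dimensional and $J(A)M=0$. In particular $\rho(L)\cap J(A)$ acts as zero on $M$, so the image of $A$ in $\End_F(M)$ is generated by the images of $G$ and $S$ only, via the decomposition $\rho(L) = G + S + \rho(L)\cap J(A)$.

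The crux is to show that each $s \in S$ commutes with the entire image of $A$ in $\End_F(M)$. For $g \in G$ this is immediate from Lemma~\ref{LemmaRS}. For $r \in R$ we have $[s,r] \in [R,R] \subseteq [\rho(L),R] \subseteq J(A)$ by Lemma~\ref{LemmaLR}, and at the same time $[s,r] \in R \subseteq \rho(L)$, so $[s,r] \in \rho(L)\cap J(A)$, which acts as zero on $M$. Hence $s$ commutes with the image of $\rho(L)$ and therefore with the image of all of $A$. Since $F$ is algebraically closed and $M$ is a finite dimensional irreducible $A$-module, Schur's lemma forces $s$ to act as a scalar operator, which is the second assertion of the lemma.

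It remains to deduce the first assertion. Once $S$ acts by scalars and $\rho(L)\cap J(A)$ acts as zero, the image of $A$ in $\End_F(M)$ coincides with the associative subalgebra generated by the image of $G$ together with $F\cdot \id_M$. Consequently $M$ is irreducible as a module over the associative enveloping algebra of the image of $G$, which is the same as being irreducible as a $G$-module. The only degenerate possibility is that $G$ itself acts on $M$ only by scalars; in that case the full image of $A$ in $\End_F(M)$ reduces to $F\cdot \id_M$, and irreducibility forces $\dim M = 1$. The main subtlety is the bracket computation $[s,r]\in \rho(L) \cap J(A)$, which requires both preparatory lemmas in tandem; the rest is a clean application of Schur's lemma.
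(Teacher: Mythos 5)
Your proof is correct and follows essentially the same route as the paper's: show that $S$ commutes with the $\rho(L)$-action on $M$ (the paper deduces this at once from $[\rho(L),R]M \subseteq J(A)M = 0$ via Lemma~\ref{LemmaLR}, while you split into $[S,G]=0$ from Lemma~\ref{LemmaRS} and $[S,R]\subseteq J(A)$ from Lemma~\ref{LemmaLR}), apply Schur's lemma to get scalar action, and conclude that $G$-invariant subspaces are $\rho(L)$-invariant.
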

\begin{proof}
Note that $M$ is an irreducible $\rho(L)$-module.
By Lemma~\ref{LemmaLR}, $[\rho(L), R] \subseteq \rho(L) \cap J(A)$.
Therefore $[\rho(L), R]M\subseteq J(A)M = 0$
and, in virtue of Schur's lemma, $S \subseteq R$ acts by scalar operators on
$M$.
Thus all subspaces in $M$ invariant under the action of $G$
are invariant under the action of $\rho(L)$.
\end{proof}

\begin{lemma}\label{LemmaAnnGS}
Let $M$ be an irreducible $A$-module.
Then $$\Ann_{\rho(L)} M = \Ann_{G} M + \Ann_{S} M + (\rho(L) \cap J(A)).$$
\end{lemma}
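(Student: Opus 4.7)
The plan is to handle the two inclusions separately.

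The inclusion $\supseteq$ is immediate: the elements of $G$ and $S$ lie in $\rho(L)$, so their annihilators in these subspaces are contained in $\Ann_{\rho(L)} M$; and $\rho(L) \cap J(A) \subseteq \Ann_{\rho(L)} M$ since $J(A)M = 0$ for the irreducible $A$-module $M$.

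For the reverse inclusion, I would first combine the Levi decomposition $\rho(L) = G \oplus R$ with Lemma~\ref{LemmaRS} to obtain the vector-space direct sum $\rho(L) = G \oplus S \oplus (\rho(L) \cap J(A))$. Any $x \in \Ann_{\rho(L)} M$ then decomposes uniquely as $x = g + s + j$ with $g \in G$, $s \in S$, and $j \in \rho(L) \cap J(A)$. Since $j$ kills $M$, this yields $(g+s)M = 0$; and Lemma~\ref{LemmaIrr} asserts that $s$ acts on $M$ as a scalar $\lambda \in F$, so $g$ necessarily acts as $-\lambda \cdot \id_M$.

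The crux of the argument, and its main obstacle, is to upgrade this to $\lambda = 0$; everything else is bookkeeping. Since $A$ is finite-dimensional, so is its simple module $M$. Because $G$ is semisimple, $G = [G, G]$, so $g$ is a sum of commutators of elements of $G$ and therefore has zero trace in any finite-dimensional representation. Hence $-\lambda \cdot \dim M$, which is the trace of $g$ acting on $M$, equals $0$; as $\operatorname{char} F = 0$, this forces $\lambda = 0$. Consequently $g \in \Ann_G M$ and $s \in \Ann_S M$, which yields the decomposition $x = g + s + j \in \Ann_G M + \Ann_S M + (\rho(L) \cap J(A))$.
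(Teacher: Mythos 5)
Your proof is correct and follows the same overall structure as the paper's: decompose $x \in \Ann_{\rho(L)} M$ as $g+s+j$ using $\rho(L)=G\oplus S\oplus(\rho(L)\cap J(A))$, observe that $j$ kills $M$, invoke Lemma~\ref{LemmaIrr} to get that $s$ (hence $g$) acts by a scalar, and finally argue that this scalar must be zero. The only place you diverge is in that final step. The paper notes that the scalar operator $\varphi(g)$ lies in the center of the semisimple Lie algebra $\varphi(G)$ and so vanishes; you instead use $G=[G,G]$ to force $\operatorname{tr}\varphi(g)=0$, and since $\varphi(g)$ is a scalar on a finite-dimensional space over a characteristic-zero field, the scalar is zero. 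Both justifications are standard and correct; yours is perhaps a touch more elementary (needing only the trace-of-a-commutator identity), while the paper's is a one-liner from the structure theory of semisimple Lie algebras and would work even without knowing $\dim M < \infty$ (though that is of course available here, since $A\subseteq\End_F(V)$ with $V$ finite-dimensional).
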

\begin{proof}
If $M$ is an irreducible $A$-module, then
$J(A)M = 0$.
Thus it is sufficient to prove that if $g+s \in \Ann_{\rho(L)} M$, $g \in G$,
$s \in S$,
then $g \in \Ann_{G} M$ and $s \in \Ann_{S} M$.
 Denote $\varphi \colon {\rho(L)} \to \mathfrak{gl}(M)$.
Note that $sm=-gm$ for all $m \in M$
since $g+s \in \Ann_{\rho(L)} M$.
 By Lemma~\ref{LemmaIrr}, $s$ is a scalar operator on $M$.
Therefore, $g$ is a scalar operator on $M$ too and $\varphi(g)$
belongs to the center of the semisimple algebra $\varphi(G)$.
Hence $\varphi(g)=0$ and this finishes the proof of the lemma.
\end{proof}

One of the main tools in the investigation of polynomial
identities is provided by the representation theory of symmetric groups.
 The symmetric group $S_n$ acts
 on the space $\frac {P_n}{P_n \cap \Id(\rho)}$
  by permuting the variables.
  Irreducible $FS_n$-modules are described by partitions
  $\lambda=(\lambda_1, \ldots, \lambda_s)\vdash n$ and their
  Young diagrams $D_\lambda$.
   The character $\chi_n(\rho)$ of the
  $FS_n$-module $\frac {P_n}{P_n \cap \Id(\rho)}$ is called the $n$th
  \textit{cocharacter} of the representation $\rho$.
  We can rewrite it as
  a sum $\chi_n(\rho)=\sum_{\lambda\vdash n} m(\rho, \lambda) \chi(\lambda)$ of
  irreducible characters $\chi(\lambda)$.
Let  $e_{T_{\lambda}}=a_{T_{\lambda}} b_{T_{\lambda}}$
and
$e^{*}_{T_{\lambda}}=b_{T_{\lambda}} a_{T_{\lambda}}$
where
$a_{T_{\lambda}} = \sum_{\pi \in R_{T_\lambda}} \pi$
and
$b_{T_{\lambda}} = \sum_{\sigma \in C_{T_\lambda}} (\sign \sigma) \sigma$,
be the Young symmetrizers corresponding to a Young tableau~$T_\lambda$.
Then $M(\lambda) = FS e_{T_\lambda} \cong FS e^{*}_{T_\lambda}$
is an irreducible $FS_n$-module corresponding to a partition~$\lambda \vdash n$.
  We refer the reader to~\cite{ZaiGia, Bahturin, DrenKurs} for an account
  of $S_n$-representations and their applications to polynomial
  identities.

\section{Upper bound}

Fix a composition chain of $A$-submodules
$$A=B_0 \supsetneqq B_1 \supsetneqq B_2 \supsetneqq \ldots \supsetneqq
J(A)\supsetneqq \ldots \supsetneqq B_{\theta-1}
 \supsetneqq B_\theta = \{0\}$$
 in the module $A$. Let $\height a = \max_{a \in B_k} k$ for $a \in A$.

\begin{remark}
If $d=d(\rho)=0$, then $\rho(L) \subseteq \Ann(B_{i-1}/B_i)$
for all $1 \leqslant i \leqslant \theta$ and
 $a_1 a_2 \ldots a_n =0$ for all $a_i \in \rho(L)$
 and $n \geqslant \theta +1$. Thus $c_n(\rho)=0$
 for all $n \geqslant \theta +1$. Therefore we assume $d > 0$.
\end{remark}

 Let $Y=\lbrace y_{11}, y_{12}, \ldots, y_{1j_1};\,
 y_{21}, y_{22}, \ldots, y_{2j_2}; \ldots;\,
 y_{m1}, y_{m2}, \ldots, y_{mj_m}\rbrace$,
 $Y_1$, \ldots, $Y_q$, and $\lbrace z_1, \ldots, z_m\rbrace$
 be subsets of $\lbrace x_1, x_2, \ldots, x_n\rbrace$
 such that $Y_i \subseteq Y$, $|Y_i|=d+1$, $ Y_i \cap Y_j = \varnothing$
 for $i \ne j$,
 $Y \cap \lbrace z_1, \ldots, z_m\rbrace = \varnothing$, $j_i \geqslant 0$.
 Denote $$g_{m,q}=\Alt_{1} \ldots \Alt_{q} ((y_{11}y_{12} \ldots y_{1j_1}) z_1
 (y_{21}y_{22}\ldots y_{2j_2}) z_2 \ldots (y_{m1}y_{m2} \ldots y_{mj_m}) z_m)$$
 where $\Alt_i$ is the operator of alternation on the variables of $Y_i$.

Let $\varphi \colon F\langle X \rangle \to A$
be a homomorphism induced by a substitution $\lbrace x_1, x_2, \ldots, x_n \rbrace \to \rho(L)$.
We say that $\varphi$ is \textit{proper} for  $g_{m,q}$ if
 $\varphi(z_i) \in \rho(L) \cap J(A)$ for $1\leqslant i \leqslant m-1$,
  $\varphi(z_m) \in (\rho(L) \cap J(A)) \cup G \cup S$, and
  $\varphi(y_{ik})\in G \cup S$ for $1\leqslant i \leqslant m$, $1 \leqslant k \leqslant j_i$.

\begin{lemma}\label{LemmaReduct}
Let $\varphi$ be a \textit{proper} homomorphism for $g_{m,q}$.
Then $\varphi(g_{m,q})$ can be rewritten as a
sum of $\psi(g_{m+1,q'})$ where $\psi$
is a proper homomorphism for $g_{m+1,q'}$, $q' \geqslant q - (\dim A)m - 2$.
  ($Y'$, $Y'_i$, $z'_1, \ldots, z'_{m+1}$ may be different
for different terms.)
\end{lemma}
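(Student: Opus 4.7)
\noindent\emph{Proof plan.}
The plan is to exploit the definition of $d(\rho)$ together with the composition chain of $A$ regarded as a left $A$-module in order to force the alternation of $\varphi(g_{m,q})$ deep into $J(A)$, and then to extract from the resulting expression a visible Lie element of $\rho(L)\cap J(A)$ that plays the role of a new inserted $z'$. The starting observation is that, by Lemma~\ref{LemmaAnnGS} and the $r=1$ case of the definition of $d$ (taking $I_1/J_1$ to be any irreducible composition factor $M$ of the left module $A$), the image of $G\oplus S$ in $\End_F M$ has dimension at most $d$. Hence any alternation over $d+1$ fixed positions inside a fixed monomial, where the variables in those positions take values in $G\cup S\subset G\oplus S$, is an alternating multilinear expression in $d+1$ vectors of a $d$-dimensional space and so vanishes on every irreducible composition factor; intersecting over all of them, the result of a single application of any one $\Alt_i$ already lies in $J(A)$.

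Next I would descend along the chain $A=B_0\supsetneqq\cdots\supsetneqq B_\theta=\{0\}$, $\theta\leqslant\dim A$. Each of the $m$ segments $(y_{i1}\cdots y_{ij_i})z_i$ of the monomial is treated separately, and at most $\dim A$ alternators per segment suffice to drive that segment all the way into $J(A)$; this accounts for the $(\dim A)m$ term in the loss. The extra $2$ handles the boundary adjustments at the leftmost prefix and at the rightmost block: in particular the case $\varphi(z_m)\in G\cup S$ triggers a splitting of the last block into fresh $y'$-variables plus a new $z'_{m+1}\in G\cup S$. The alternators that have not been consumed, at least $q-(\dim A)m-2$ of them, are kept and become the alternators of the new polynomial $g_{m+1,q'}$.

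The final and most delicate step is to convert the $J(A)$-membership achieved above into an explicit $\rho(L)\cap J(A)$ Lie factor sitting inside the monomial. Here one uses the commutator identity $ab-ba=[a,b]\in\rho(L)$ inside the target block, together with Lemma~\ref{LemmaRS} ($[G,S]=0$) and Lemma~\ref{LemmaLR} ($[\rho(L),R]\subseteq\rho(L)\cap J(A)$): by expanding adjacent products one exposes a bracket of the form $[x,r]$ with $x\in\rho(L)$, $r\in R$, which is a genuine element of $\rho(L)\cap J(A)$ and, after relabeling, is the new inserted $z'_i$ of a $\psi(g_{m+1,q'})$ with $\psi$ proper. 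The main obstacle is precisely this bracket extraction: it has to be performed uniformly across every summand of the alternator expansion while respecting the block structure dictated by $g_{m+1,q'}$, and the accompanying combinatorial bookkeeping is exactly what yields the precise estimate $q'\geqslant q-(\dim A)m-2$.
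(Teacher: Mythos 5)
The overall direction you sketch is right in spirit but it is not yet a proof, and the two observations you lean on do not play the role you want them to.

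Your first observation (the $r=1$ case of the definition of $d$, showing that any single alternated monomial with $d+1$ slots filled from $G\cup S$ lands in $J(A)$) is correct as stated, but it is a statement about where $\varphi(g_{m,q})$ lies, not a statement about how to rewrite it. The lemma does not ask you to show that $\varphi(g_{m,q})\in J(A)$; it asks you to exhibit new proper homomorphisms $\psi$ of polynomials $g_{m+1,q'}$ whose images sum to $\varphi(g_{m,q})$, which in particular means producing, inside the monomial, an explicit factor in $\rho(L)\cap J(A)$ to serve as $z'_{m+1}$. The $r=1$ observation gives no handle on that. The paper instead exploits the $r=m$ case: it runs an induction on the total height $\sum_i\height\varphi(z_i)$ of the $z$-slots in a fixed composition chain, sets $I_i=B_{\height\varphi(z_i)}$, $J_i=B_{\height\varphi(z_i)+1}$, and splits according to whether these $m$ pairs satisfy Conditions 1--2. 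If not, $\varphi(z_i)$ is split into a $T_i$-component and a $J_i$-component; the all-$T$ term vanishes, the others have strictly larger heights, and one recurses without touching $q$. If they do satisfy Conditions 1--2, then the \emph{common} annihilator $\bigcap_k\Ann(I_k/J_k)$ has codimension at most $d$ in $\rho(L)$, so by alternation on $d+1$ slots at least one $y\in Y_q$ must hit it, decomposed via Lemma~\ref{LemmaAnnGS} into a $G$-part, an $S$-part and a $J(A)$-part.

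This leads to the second, and crucial, gap. You flag the bracket extraction as the main obstacle and then stop; but the proof lives exactly there, and there is a dichotomy you do not mention. If the annihilating $y$ lands in $G\cap\bigcap\Ann$, the fact that this is a Lie ideal of $G$ commuting with $S$ forces $\varphi(g_{m,q})=0$ --- no $z'_{m+1}$ is extracted at all. If instead it lands in $S\cap\bigcap\Ann$, the paper commutes $y_{k\ell}$ across the block towards $z_k$: the straightened term absorbs $\varphi(y_{k\ell})\varphi(z_k)$ into a new $z'_k$ of strictly larger height (so one recurses), while the commutator terms $[y_{k\ell},y_{k\beta}]$ vanish when $\varphi(y_{k\beta})\in G$ (since $[G,S]=0$) and land in $\rho(L)\cap J(A)$ when $\varphi(y_{k\beta})\in S$, giving the new $z'_{m+1}$. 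Your plan has only the bracket heuristic, not the case split, the identification of which $y$ to move, or the reason why the competing commutator terms can be discarded or absorbed. Finally, the count ``at most $\dim A$ alternators per segment'' is not how the loss $(\dim A)m+2$ arises: it comes from bounding the depth of the height induction by $(\dim A)m$ (losing at most one alternator per recursion) plus the two alternators expanded in the terminal step, not from any per-segment consideration.
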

\begin{proof}
Let $\alpha_i = \height \varphi(z_i)$.
We will use induction on $\sum_{i=1}^m \alpha_i$.
(The sum will grow.)
 Note that $ \alpha_i \leqslant \theta \leqslant \dim A$.
Denote $I_i = B_{\alpha_i}$, $J_i = B_{\alpha_{i}+1}$.

First, consider the case when $I_1, \ldots, I_m$,
$J_1, \ldots, J_m$ do not satisfy Conditions 1--2.
In this case we can choose $G$-submodules
$T_i$, $I_i = T_i \oplus J_i$, such that
$T_1 h_1 T_2 h_2 \ldots h_{m-1} T_m = 0$
for all $h_i \in A \cup \lbrace 1 \rbrace$.
Rewrite $\varphi(z_i)=a'_i+a''_i$, $a'_i \in T_i$, $a''_i \in J_i$.
Note that $\height a''_i > \height \varphi(z_i)$.
Since $g_{m,q}$ is multilinear, we can rewrite
$\varphi(g_{m,q})$ as a sum of similar
terms $\tilde\varphi(g_{m,q})$ where $\tilde\varphi(z_i)$ equals either
$a'_i$ or $a''_i$. The term where all
$\tilde\varphi(z_i)=a'_i \in T_i$, equals $0$ since
$I_1, \ldots, I_m$,
$J_1, \ldots, J_m$ do not satisfy Conditions 1--2.
For the other terms $\tilde\varphi(g_{m,q})$ we
have $\sum_{i=1}^m \height \tilde\varphi(z_i) > \sum_{i=1}^m \height \varphi(z_i)$.

Thus without lost of generality
we may assume that $I_1, \ldots, I_m$,
$J_1, \ldots, J_m$ satisfy Conditions 1--2.
In this case, $\dim(\rho(L) \cap \Ann(I_1/J_1) \cap \ldots \cap \Ann(I_m/J_m))
\geqslant \dim(\rho(L))-d$.
In virtue of Lemma~\ref{LemmaAnnGS},
$$\rho(L) \cap \Ann(I_1/J_1) \cap \ldots \cap \Ann(I_m/J_m)
= $$ $$G \cap \Ann(I_1/J_1) \cap \ldots \cap \Ann(I_m/J_m)\
+\ S \cap \Ann(I_1/J_1) \cap \ldots \cap \Ann(I_m/J_m)\ +\ \rho(L) \cap J(A).$$
Choose a basis in $G$ that includes a basis of $G \cap \Ann(I_1/J_1) \cap \ldots \cap \Ann(I_m/J_m)$ and a basis in $S$ that includes a basis of $S \cap \Ann(I_1/J_1) \cap \ldots \cap \Ann(I_m/J_m)$. Since $g_{m,q}$ is multilinear, we may assume
that only basis elements are substituted for $y_{k\ell}$. Note that $g_{m,q}$
is alternating in $Y_i$. Hence, if $\varphi(g_{m,q})\ne 0$, then for every $1 \leqslant i \leqslant q$
there exists $y_{k\ell} \in Y_i$ such that
either $\varphi(y_{k\ell}) \in G \cap \Ann(I_1/J_1) \cap \ldots \cap \Ann(I_m/J_m)$
or $\varphi(y_{k\ell}) \in S \cap \Ann(I_1/J_1) \cap \ldots \cap \Ann(I_m/J_m)$.

Consider the case when $\varphi(y_{k\ell}) \in G \cap \Ann(I_1/J_1) \cap \ldots \cap \Ann(I_m/J_m)$
for some $y_{k\ell}$.
We can choose $G$-submodules $T_k$ such that
$I_k = T_k \oplus J_k$. We may assume
that $\varphi(z_k) \in T_k$ since elements of $J_k$ have greater heights.
Therefore $a\varphi(z_k) \in T_k \cap J_k$ for all $a \in G \cap \Ann(I_1/J_1) \cap \ldots \cap \Ann(I_m/J_m)$.
Hence $a\varphi(z_k)=0$. Moreover, $G \cap \Ann(I_1/J_1) \cap \ldots \cap \Ann(I_m/J_m)$
is a Lie ideal of $G$ and $[G,S]=0$. Thus
$\varphi(y_{k1}\ldots y_{kj_k}z_k) = 0$.
Expanding the alternations, we obtain $\varphi(g_{m,q})=0$.

Consider the case when $\varphi(y_{k\ell}) \in S \cap \Ann(I_1/J_1) \cap \ldots \cap \Ann(I_m/J_m)$
for some $y_{k\ell} \in Y_q$. Expand the alternation $\Alt_q$ in $g_{m,q}$
and rewrite $g_{m,q}$ as a sum of
$$\tilde g_{m,q-1} =\Alt_{1} \ldots \Alt_{q-1} ((y_{11}y_{12} \ldots y_{1j_1}) z_1
 (y_{21}y_{22}\ldots y_{2j_2}) z_2 \ldots (y_{m1}y_{m2} \ldots y_{mj_m}) z_m).$$
 The operator $\Alt_q$ may change indices, however we
 keep the notation $y_{k\ell}$ for the variable with the property
 $\varphi(y_{k\ell}) \in S \cap \Ann(I_1/J_1) \cap \ldots \cap \Ann(I_m/J_m)$.
Now the alternation does not affect $y_{k\ell}$.
Note that $$y_{k1}y_{k2}\ldots y_{k j_k} z_k
= y_{k1} y_{k2} \ldots y_{k,{\ell-1}}
y_{k,{\ell+1}}\ldots y_{k j_k} y_{k\ell} z_k
+$$ $$
\sum\limits_{\beta=\ell+1}^{j_k}
y_{k1} y_{k2} \ldots y_{k,{\ell-1}}
 y_{k,{\ell+1}}
\ldots y_{k,{\beta-1}} [y_{k\ell},y_{k\beta}]
y_{k,{\beta+1}}\ldots y_{kj_k} z_k.$$
In the first polynomial on the right hand side we replace $y_{k\ell} z_k$
with $z'_k$ and define $\varphi'(z'_k)
= \varphi(y_{k\ell}) \varphi(z_k)$, $\varphi'(x) = \varphi(x)$ for other
variables~$x$. Then $\height\varphi'(z'_k) >
\height\varphi(z_k)$ and we can use the inductive assumption.
If $y_{k\beta}\in Y_j$ for some $j$, then we expand the alternation $\Alt_j$
in this term in $\tilde g_{m,q-1}$.
If $\varphi(y_{k\beta}) \in G$, then this term is zero.
If $\varphi(y_{k\beta}) \in S$, then $\varphi([y_{k\ell},y_{k\beta}]) \in \rho(L) \cap J(A)$.
We replace $[y_{k\ell},y_{k\beta}]$ with an additional variable $z'_{m+1}$
and define $\psi(z'_{m+1})=\varphi([y_{k\ell},y_{k\beta}])$, $\psi(x)=\varphi(x)$
for other variables $x$.
Then the polynomial has the desired form. In each inductive step
we reduce $q$ no more than by $1$ and the maximal number of
inductive steps equals $(\dim A)m$. This finishes the proof.
\end{proof}

Since the Jacobson radical is a nilpotent ideal,
 $J(A)^{p} = 0$ for some $p\in \mathbb N$.

\begin{lemma}\label{LemmaUpper}
If $\lambda = (\lambda_1, \ldots, \lambda_s) \vdash n$
and $\lambda_{d+1} \geqslant p((\dim A)p+3)$ or $\lambda_{\dim\rho(L)+1} > 0$, then
$m(\rho, \lambda) = 0$.
\end{lemma}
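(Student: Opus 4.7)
My plan is to argue by contradiction. Suppose $m(\rho, \lambda) \neq 0$; then there exist a Young tableau $T_\lambda$ and a multilinear $f \in P_n$ with $e^*_{T_\lambda} f \notin \Id(\rho)$, and a substitution $\varphi\colon \{x_1, \ldots, x_n\} \to \rho(L)$ satisfies $\varphi(e^*_{T_\lambda} f) \neq 0$ in $A$. By the decomposition $\rho(L) = G + S + (\rho(L) \cap J(A))$ guaranteed by Lemmas~\ref{LemmaLR} and~\ref{LemmaRS}, together with multilinearity, we may assume that each $\varphi(x_i)$ lies in exactly one of $G$, $S$, or $\rho(L) \cap J(A)$. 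Recall that $e^*_{T_\lambda} f = b_{T_\lambda}(a_{T_\lambda} f)$ is alternating on the variables of each column of $T_\lambda$.

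If $\lambda_{\dim\rho(L)+1} > 0$, the first column of $T_\lambda$ has length strictly greater than $\dim\rho(L)$, so alternating that many variables over the space $\rho(L)$ forces $\varphi(e^*_{T_\lambda} f) = 0$, a contradiction.

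For the case $\lambda_{d+1} \geqslant p((\dim A)p + 3)$, observe first that strictly fewer than $p$ of the $\varphi(x_i)$ lie in $\rho(L) \cap J(A)$: otherwise every monomial of $\varphi(e^*_{T_\lambda} f)$ contains at least $p$ factors from the two-sided ideal $J(A)$ interleaved with elements of $A$, and therefore lies in $J(A)^p = 0$. Let $k \leqslant p - 1$ denote the actual number of such variables. The first $q := \lambda_{d+1}$ columns of $T_\lambda$ have length $\geqslant d+1$; their top $d+1$ entries give disjoint sets $Y_i$ on which $e^*_{T_\lambda} f$ is alternating. Discarding the at most $k$ sets containing a $J(A)$-variable leaves $q_0 \geqslant q - k$ alternation sets whose variables all lie in $G \cup S$. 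Designating the $J(A)$-variables as the initial breakpoints $z_1, \ldots, z_{m_0-1}$ (with $z_{m_0}$ either the last $J(A)$-variable or an additional non-$J(A)$-variable, so $1 \leqslant m_0 \leqslant k+1$), we exhibit $\varphi(e^*_{T_\lambda} f)$, up to a non-zero scalar, as a sum of proper evaluations of polynomials $g_{m_0, q_0}$.

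Now iterate Lemma~\ref{LemmaReduct} exactly $p + 1 - m_0$ times, producing proper evaluations of $g_{p+1, q'}$. Each iteration raises $m$ by $1$ and drops $q$ by at most $(\dim A) m + 2$; the cumulative drop is bounded by $\sum_{i=1}^{p}((\dim A) i + 2) = (\dim A) p(p+1)/2 + 2p$, attained at $m_0 = 1$. Adding the initial loss $k \leqslant p - 1$ yields a total decrement of at most $(\dim A) p(p+1)/2 + 3p - 1 \leqslant p((\dim A)p + 3)$, so $q' \geqslant 0$ is guaranteed by the hypothesis. In the resulting proper evaluation of $g_{p+1, q'}$, we have $z_1, \ldots, z_p \in \rho(L) \cap J(A)$; since $J(A)$ is a two-sided ideal of $A$, the product contains $p$ factors from $J(A)$ interleaved with other factors and hence lies in $J(A)^p = 0$, contradicting $\varphi(e^*_{T_\lambda} f) \neq 0$.

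The main obstacle is the regrouping step: rewriting the symmetrized polynomial $e^*_{T_\lambda} f$ as a sum of $g_{m_0, q_0}$-polynomials with $z$-positions compatible with a proper $\varphi$. This requires careful combinatorial analysis of how the at most $p-1$ variables evaluated in $J(A)$ sit inside the column-alternation structure of $T_\lambda$, and how the monomials of $e^*_{T_\lambda} f$ partition according to these positions. Once that bookkeeping is done, the iteration of Lemma~\ref{LemmaReduct} and the closing $J(A)^p = 0$ count proceed mechanically.
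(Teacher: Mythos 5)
Your proposal follows essentially the same approach as the paper's own proof: fix a basis adapted to the decomposition $\rho(L) = G \oplus S \oplus (\rho(L)\cap J(A))$ from Lemmas~\ref{LemmaLR} and~\ref{LemmaRS}, observe that fewer than $p$ substituted basis elements can lie in $J(A)$, peel off the column alternations on those variables to exhibit $\varphi(e^*_{T_\lambda} f)$ as a sum of proper evaluations of $g_{m,q}$, then iterate Lemma~\ref{LemmaReduct} up to $m=p+1$ and invoke $J(A)^p=0$. Your bound bookkeeping $\sum_{i=1}^{p}((\dim A)i+2) + (p-1) \leqslant p((\dim A)p+3)$ is in fact a cleaner justification of the constant than the paper's terse ``Reducing $q$ no more than by $p((\dim A)p+2)$.'' Two small corrections: (i) the decomposition into $g_{m_0,q_0}$-evaluations is an exact equality, not ``up to a nonzero scalar''; (ii) when the terminal variable $z_{m_0}$ lies in $G\cup S$ its column's alternation must also be expanded, giving $q_0 \geqslant q - k - 1$ rather than $q - k$ (equivalently, $q_0 \geqslant q - m_0$ uniformly), but your slack absorbs this. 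The ``regrouping step'' you flag as an obstacle is precisely what the paper dispatches in one sentence --- expand the symmetrizer $a_{T_\lambda}$ into monomials, expand $b_{T_\lambda}$'s alternation on columns containing $J(A)$-variables and on the column of the last variable --- and your sketch of it is the right picture.
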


\begin{proof}
It is sufficient to prove that $e^{*}_{T_\lambda} f \in \Id(\rho)$
for every $f\in P_n$ and a Young tableau $T_\lambda$, $\lambda \vdash n$, with
$\lambda_{d+1} \geqslant p((\dim A)p+3)$ or $\lambda_{\dim\rho(L)+1} > 0$.

Fix some basis of $\rho(L)$ that is a union of
bases of $G$, $S$, and $\rho(L) \cap J(A)$.
Since polynomials are multilinear, it is
sufficient to substitute only basis elements.
 Note that
$e^{*}_{T_\lambda} = b_{T_\lambda} a_{T_\lambda}$
and $b_{T_\lambda}$ alternates the variables of each column
of $T_\lambda$. Hence if we make a substitution and $
e^{*}_{T_\lambda} f$ does not vanish, then this implies that different basis elements
are substituted for the variables of each column.
But if $\lambda_{\dim\rho(L)+1} > 0$, then the length of the first column is greater
than $\dim\rho(L)$. Therefore, $e^{*}_{T_\lambda} f \in \Id(\rho)$.

Consider the case $\lambda_{d+1} \geqslant p((\dim A)p+3)$.
 Let $\varphi$ be a substitution of basis elements for the variables
 $x_1, \ldots, x_n$.
Then $e^{*}_{T_\lambda}f$ can be rewritten as a sum of polynomials $g_{m,q}$
where $1 \leqslant m \leqslant p$, $q \geqslant p((\dim A)p+2)$, and
$z_i$, $1\leqslant i \leqslant m-1$, are replaced with
 elements of $\rho(L) \cap J(A)$.  (For different terms $g_{m,q}$,
 numbers $m$ and $q$,
 variables $z_i$, $y_{ij}$, and sets $Y_i$ can be different.)
 Indeed, we expand symmetrization on all variables and alternation on
 the variables replaced with $\rho(L) \cap J(A)$. Also we expand the alternation
 on the set that includes the last variable, which we denote by $z_m$.

 Applying Lemma~\ref{LemmaReduct} many times, we increase $m$.
 The ideal $J(A)$ is nilpotent and $\varphi(g_{p+1,q})=0$
 for every $q$ and proper homomorphism~$\varphi$.
  Reducing $q$ no more than by $p((\dim A)p+2)$,
 we obtain $\varphi(e^{*}_{T_\lambda}f)=0$.
\end{proof}
Now we can prove
\begin{theorem}\label{TheoremUpper} If $d > 0$, then
there exist constants $C_2 > 0$, $r_2 \in \mathbb R$
such that $c_n(\rho) \leqslant C_2 n^{r_2} d^n$
for all $n \in \mathbb N$. In the case $d=0$, the equality $c_n(\rho)=0$
holds for all sufficiently large $n$.
\end{theorem}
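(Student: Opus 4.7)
My plan is to bound $c_n(\rho)$ via the cocharacter decomposition $c_n(\rho) = \sum_\lambda m(\rho,\lambda) \dim M(\lambda)$, exploiting the severe restrictions on contributing partitions furnished by Lemma~\ref{LemmaUpper}: $m(\rho,\lambda) = 0$ unless $\lambda_{d+1} < C$ and $\ell(\lambda) \leq k_0$, where I set $C := p((\dim A)p+3)$ and $k_0 := \dim \rho(L)$.

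The case $d = 0$ is immediate. Here the restriction reads $\lambda_1 < C$ and $\ell(\lambda) \leq k_0$, so $n = |\lambda| \leq (C - 1) k_0$. Hence for $n \geq C k_0 = p((\dim A)p+3) \cdot \dim \rho(L)$ no $\lambda$ contributes and $c_n(\rho) = 0$.

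For $d > 0$, I would decompose each contributing $\lambda$ as $\lambda = \mu + \nu$ into a main body $\mu = (\lambda_1, \ldots, \lambda_d)$ of length at most $d$ and a tail $\nu$ of bounded size $|\nu| < C(k_0 - d)$. The number of such $\lambda$ is polynomial in $n$ (at most polynomially many bodies with at most $d$ parts summing to $\leq n$, times a bounded count of tails), and by the hook-length formula $\dim M(\lambda) \leq C' n^{r'} d^n$: the body of length $\leq d$ produces the exponential factor $d^n$, while the bounded tail contributes only polynomial corrections.

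The main obstacle is that the naive multiplicity bound $m(\rho,\lambda) \leq \dim M(\lambda)$, coming from $P_n$ being the left regular $S_n$-module, combines with the above to yield only $d^{2n}$. To reach the sharper $d^n$ I would establish the polynomial multiplicity bound $m(\rho,\lambda) \leq P(n)$ by tracking substitutions into $e^{*}_{T_\lambda} f$: the factor $b_{T_\lambda}$ forces distinct basis elements of $\rho(L)$ along each column of $T_\lambda$, and only polynomially many independent operators on $V$ arise modulo $\Id(\rho)$ from the remaining combinatorial choices. Combined with the Schur--Weyl bound $\sum_{\ell(\lambda)\leq d}\dim M(\lambda) \leq d^n$ applied to the body, the tail being absorbed into the polynomial factor, this yields the desired estimate $c_n(\rho) \leq C_2 n^{r_2} d^n$.
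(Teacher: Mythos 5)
Your overall strategy matches the paper's: restrict the contributing partitions via Lemma~\ref{LemmaUpper}, bound the sum of $\dim M(\lambda)$ over those partitions by a polynomial times $d^n$ (this is exactly what the cited \cite[Lemmas~6.2.4, 6.2.5]{ZaiGia} do, and your body/tail decomposition plus the hook formula is a reasonable reconstruction of that argument), and then observe that the missing ingredient is a polynomial bound on the multiplicities $m(\rho,\lambda)$. Your handling of the case $d=0$ is correct and complete. You have also correctly diagnosed that the naive bound $m(\rho,\lambda)\leqslant\dim M(\lambda)$ would give only $d^{2n}$, so the multiplicity bound is genuinely the crux.

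The gap is precisely there. Your sketch --- ``tracking substitutions into $e^{*}_{T_\lambda}f$: the factor $b_{T_\lambda}$ forces distinct basis elements of $\rho(L)$ along each column, and only polynomially many independent operators on $V$ arise modulo $\Id(\rho)$ from the remaining combinatorial choices'' --- is not an argument. It is unclear what ``remaining combinatorial choices'' refers to, and the assertion that they produce only polynomially many independent classes in $P_n/(P_n\cap\Id(\rho))$ is essentially a restatement of what needs to be proved. The polynomial bound on multiplicities in the cocharacter of a finite dimensional algebra is a nontrivial theorem, not something that follows from counting column fillings.

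The paper avoids proving this from scratch by a short reduction: since $\rho(L)\subseteq A$, every multilinear identity of the associative algebra $A$ is an identity of $\rho$, so $\Id(A)\subseteq\Id(\rho)$, hence $P_n/(P_n\cap\Id(\rho))$ is a quotient of $P_n/(P_n\cap\Id(A))$ as an $S_n$-module and $m(\rho,\lambda)\leqslant m(A,\lambda)$. Because $A$ is a finite dimensional associative algebra, the polynomial bound $m(A,\lambda)\leqslant C_4 n^{r_4}$ is available off the shelf (\cite[Lemma~4.9.2]{ZaiGia}). You should replace your sketchy direct attempt by this reduction, or else actually carry out a proof of the multiplicity bound, which would require substantially more work (essentially redoing the Berele--Regev/Giambruno--Zaicev analysis).
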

\begin{proof}
Lemma~\ref{LemmaUpper} and~\cite[Lemmas~6.2.4, 6.2.5]{ZaiGia}
imply
\begin{equation}\label{SomeBound}
\sum_{m(\rho, \lambda)\ne 0} \dim M(\lambda) \leqslant C_3 n^{r_3} d^n
\end{equation}
for some constants $C_3, r_3 > 0$.
Denote by $\Id(A) \subseteq F\langle X \rangle$ the ideal of polynomial identities of the associative
algebra~$A$. Let $\chi_n(A)=\chi\left(\frac{P_n}{P_n \cap \Id(A)}\right)
= \sum_{\lambda} m(A, \lambda) \chi(\lambda)$.
Then $\Id(A) \subseteq \Id(\rho)$ and $m(\rho, \lambda) \leqslant m(A, \lambda)
\leqslant C_4 n^{r_4}$ for some $C_4, r_4 > 0$ by~\cite[Lemma~4.9.2]{ZaiGia}.
Together with~(\ref{SomeBound}) this implies the upper bound.
\end{proof}

\section{Lower bound}

By the definition of $d$, there exist left associative ideals $I_1$, $I_2$, \ldots, $I_r$,
$J_1$, $J_2$, \ldots, $J_r$, $r \in \mathbb Z_+$, of the algebra $A$,
satisfying Conditions 1--2, $J_k \subseteq I_k$, such that
$$d = \dim \frac{\rho(L)}{\rho(L) \cap \Ann(I_1/J_1) \cap \dots \cap \Ann(I_r/J_r)}.$$
We consider the case $d > 0$.

Without loss of generality we may assume that
$$\rho(L) \cap \bigcap\limits_{k=1}^r \Ann(I_k/J_k) \ne
\rho(L) \cap \bigcap\limits_{\substack{\phantom{,}k=1,\\ k\ne\ell}}^r \Ann(I_k/J_k)$$
for all $1 \leqslant \ell \leqslant r$.
In particular, $\rho(L)$ has nonzero action on each $I_k/J_k$.

Our aim is to present a partition $\lambda \vdash n$
with $m(\rho, \lambda)\ne 0$ such that $\dim M(\lambda)$
has the desired asymptotic behavior.
We will glue alternating polynomials constructed
by Yu.P.~Razmyslov's theorem
from faithful irreducible modules
over reductive algebras. In order to do this,
we have to choose the reductive algebras.

\begin{lemma}\label{LemmaChooseai}
There exist Lie ideals $G_1, \ldots, G_r$
in $G$ and elements $a_1,\ldots,a_r \in S$
(some of $a_i$ and $G_j$ may be zero)
such that
\begin{enumerate}
\item $G_1+ \ldots + G_r=G_1\oplus \ldots \oplus G_r$;
\item $\langle a_1, a_2, \ldots, a_r \rangle_F = \langle a_1 \rangle_F \oplus \ldots \oplus \langle a_r\rangle_F$;
\item $\sum\limits_{k=1}^r(\dim (G_k\oplus \langle a_k \rangle_F) = d$;
\item $I_k/J_k$ is a faithful $G_k\oplus \langle a_k \rangle_F$-module;
\item $G_i I_k/J_k = a_i I_k/J_k = 0$ for $i > k$.
\end{enumerate}
\end{lemma}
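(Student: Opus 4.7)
The plan is to invoke Lemma~\ref{LemmaAnnGS} to split each annihilator into its $G$-part and $S$-part, and then to construct the $G_k$'s from the simple-ideal decomposition of $G$ and the $a_k$'s from a descending chain of common kernels in $S$. For each $k$, set $\mathfrak{a}_k := \Ann_G(I_k/J_k)$; this is a Lie ideal of $G$. By Lemma~\ref{LemmaIrr} (which applies to $I_k/J_k$ in either case of Condition~1), there is a linear functional $\chi_k \colon S \to F$ with $sv = \chi_k(s)v$ for $s \in S$, $v \in I_k/J_k$, so that $\Ann_S(I_k/J_k) = \ker\chi_k$. Combining Lemma~\ref{LemmaAnnGS} with the direct sum $\rho(L) = G \oplus S \oplus (\rho(L) \cap J(A))$ from Lemma~\ref{LemmaRS} yields
\[
d = \dim\frac{G}{\bigcap_k \mathfrak{a}_k} + \dim\frac{S}{\bigcap_k \ker\chi_k}.
\]

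For the $G$-side, decompose $G = \bigoplus_\alpha G^{(\alpha)}$ into simple ideals and set $K_\alpha := \{k : G^{(\alpha)} \not\subseteq \mathfrak{a}_k\}$; by simplicity, each $G^{(\alpha)}$ either lies in $\mathfrak{a}_k$ or acts faithfully on $I_k/J_k$. Define
\[
G_k := \bigoplus_{\alpha \colon \min K_\alpha = k} G^{(\alpha)}.
\]
The choice of $\min K_\alpha$ is essentially forced: Condition~5 demands $G_i \subseteq \mathfrak{a}_k$ for all $k < i$, which combined with faithfulness (Condition~4) pins the assignment down. Then $\bigoplus_k G_k$ is the complement of $\bigcap_k \mathfrak{a}_k$ in $G$ consisting of the simple ideals with $K_\alpha \ne \emptyset$, each $G_k$ is a Lie ideal of $G$ that acts faithfully on $I_k/J_k$, and the direct-sum Condition~1 is automatic.

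For the $S$-side, form the descending chain $W_k := \bigcap_{j < k} \ker\chi_j$ in $S$ (with $W_1 := S$). Each quotient $W_k/W_{k+1}$ embeds into $F$ via $\chi_k$ and so has dimension $0$ or $1$. Choose $a_k \in W_k \setminus W_{k+1}$ when nonempty, and $a_k = 0$ otherwise; then $\chi_j(a_k) = 0$ for $j < k$ (giving Condition~5 for the $a_i$'s) and $\chi_k(a_k) \ne 0$ when $a_k \ne 0$. Linear independence of the nonzero $a_k$'s follows by evaluating any relation $\sum c_k a_k = 0$ against $\chi_{k_0}$ at the smallest index $k_0$ with $c_{k_0} \ne 0$: all other terms vanish. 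The count of nonzero $a_k$'s equals $\dim S / \bigcap_k \ker\chi_k$, which together with the $G$-count delivers Condition~3.

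The main obstacle I anticipate is verifying joint faithfulness in Condition~4 when both $G_k$ and $a_k$ are nonzero. If $g + c a_k$ annihilates $I_k/J_k$ with $g \in G_k$ and $c \in F$, then $g$ acts as the scalar $-c\chi_k(a_k)\,\Id$ on $I_k/J_k$; since $G_k = [G_k, G_k]$ is semisimple, its elements have trace zero on every finite dimensional representation, forcing the scalar to be zero, hence $c = 0$ (as $\chi_k(a_k) \ne 0$) and then $g = 0$ by the faithfulness of $G_k$ already established. Beyond this trace argument, the only real delicacy is coordinating the two constructions, but the fact that $[G, S] = 0$ (Lemma~\ref{LemmaRS}) keeps them independent, so they can be performed in parallel.
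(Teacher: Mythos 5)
Your proof is correct and follows essentially the same route as the paper, which constructs $G_\ell$ and $a_\ell$ as complements in the descending chain $N_\ell = \rho(L)\cap\bigcap_{k=1}^\ell\Ann(I_k/J_k)$ and obtains the dimension count by applying Lemma~\ref{LemmaAnnGS} to decompose each $N_\ell$ compatibly with $\rho(L)=G\oplus S\oplus(\rho(L)\cap J(A))$; your simple-ideal formula for $G_k$ and your chain $W_k$ are exactly these complements made explicit. The only point where you supply an argument the paper leaves implicit is joint faithfulness in Condition~4, and your trace argument is valid, though it also falls out at once from the complement construction: if $g+ca_k\in\Ann_{\rho(L)}(I_k/J_k)$ then $g+ca_k\in N_k$, and since $N_k$ is graded with respect to $G\oplus S\oplus(\rho(L)\cap J(A))$ its components $g\in N_k\cap G$ and $ca_k\in N_k\cap S$ already vanish by the direct-sum choice.
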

\begin{proof}
Consider $N_\ell = \rho(L) \cap \bigcap\limits_{k=1}^\ell \Ann (I_k/J_k)$,
$1 \leqslant \ell \leqslant r$, $N_0 = \rho(L)$.
Since $G$ is semisimple, we can choose such ideals $G_\ell$
that $N_{\ell-1} \cap G = G_\ell \oplus (N_\ell \cap G)$.
Note that $\dim(S/\Ann_S (I_\ell/J_\ell)) \leqslant 1$
since $S$ acts by scalar operators by Lemma~\ref{LemmaIrr}. Thus we
can choose elements $a_\ell$ such
that $N_{\ell-1} \cap S = \langle a_\ell \rangle_F \oplus (N_\ell \cap G)$.
Hence Properties 1, 2, 4, 5 hold.
 By Lemma~\ref{LemmaAnnGS},
 $N_k = N_k \cap G + N_k \cap S + \rho(L) \cap J(A)$.
 Therefore, $$N_{\ell-1} = G_\ell + N_\ell \cap G + \langle a_\ell \rangle_F +
   N_\ell \cap S + \rho(L) \cap J(A)
   = (G_\ell+\langle a_\ell \rangle_F)+N_\ell$$
   (direct sum of subspaces) and Property 3 holds.
\end{proof}

\begin{lemma}\label{LemmaJordanDecomp}
For every $1 \leqslant i \leqslant r$ there exists a decomposition $a_i = b_i + c_i$
such that $c_i \in A$ acts as a diagonalizable operator on $A$ and $b_i \in J(A)$.
Furthermore, elements $c_i$ commute with each other
 and $b_i$ and $c_i$ are polynomials in $a_1, \ldots, a_r$.
\end{lemma}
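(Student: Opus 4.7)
The plan is to prove the lemma by an idempotent-lifting argument carried out inside the subalgebra $W$ of $A$ generated by $a_1,\ldots,a_r$. The key point is that the naive approach --- take $c_i$ to be the Jordan semisimple part of $a_i$ as an operator on $V$ --- does not work, because the semisimple parts of pairwise noncommuting elements need not commute; one must construct the $c_i$'s jointly, not one $a_i$ at a time.

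First I would show that the quotient $\overline{W}:=W/(W\cap J(A))$ is commutative and isomorphic to $F^k$ for some $k$. By Lemma~\ref{LemmaIrr}, each $a_i\in S$ acts as a scalar on every irreducible $A$-module, so the image $\bar a_i\in A/J(A)$ lies in the centre of the semisimple algebra $A/J(A)$. Hence $\overline{W}$ is commutative; its Jacobson radical is trivial because $\overline{W}$ embeds into a semisimple algebra; and, since $F$ is algebraically closed, $\overline{W}\cong F^k$. In particular $W\cap J(A)=J(W)$ is a nilpotent ideal of $W$.

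Next I would lift the $k$ primitive orthogonal idempotents of $\overline{W}$ to pairwise orthogonal idempotents $\epsilon_1,\ldots,\epsilon_k\in W$, which is possible by the standard lifting theorem for nil ideals. Writing $\bar a_i=\sum_{j=1}^{k}\alpha_{ij}\bar\epsilon_j$ with scalars $\alpha_{ij}\in F$, I would set
$$c_i:=\sum_{j=1}^{k}\alpha_{ij}\epsilon_j,\qquad b_i:=a_i-c_i.$$
Then $b_i,c_i\in W$ are polynomials in $a_1,\ldots,a_r$; the relation $\bar a_i=\bar c_i$ gives $b_i\in W\cap J(A)\subseteq J(A)$; and orthogonality of the $\epsilon_j$ yields $c_ic_j=\sum_{m}\alpha_{im}\alpha_{jm}\epsilon_m=c_jc_i$.

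Finally I would verify that left multiplication by $c_i$ is diagonalizable on $A$. Using the idempotent $e:=\epsilon_1+\cdots+\epsilon_k\in W$, one has a Peirce-type decomposition $A=\bigl(\bigoplus_{j=1}^{k}\epsilon_jA\bigr)\oplus\{x\in A:\epsilon_jx=0\text{ for every }j\}$; on $\epsilon_jA$ the operator of left multiplication by $c_i$ acts as the scalar $\alpha_{ij}$, and on the last summand it is zero, so it is diagonalizable. The main obstacle is really the very first step: one must recognize that commutativity of the $c_i$ cannot be obtained from individual Jordan decompositions of the $a_i$, but has to come from the centrality of each $\bar a_i$ in $A/J(A)$, and this is exactly what forces the joint construction via idempotent lifting inside $W$.
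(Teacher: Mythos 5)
Your proof is correct in substance, but it takes a genuinely different route from the paper's, and the difference is worth spelling out. The paper proves $J(A_1)\subseteq J(A)$ (where $A_1=W$ is the subalgebra generated by $a_1,\ldots,a_r$) by brute force: it invokes Lie's theorem to embed $R+J(A)$ into upper-triangular matrices, deduces $\psi(A_1)\subseteq UT_m(F)$, applies Wedderburn--Malcev to write $A_1=Fe_1\oplus\cdots\oplus Fe_t\oplus J(A_1)$, and then spends the bulk of the proof on a delicate induction (using $[A,R]\subseteq J(A)$ from Lemma~\ref{LemmaLR}) to show that the ideal of $A$ generated by $J(A_1)+J(A)$ is nilpotent, whence $J(A_1)\subseteq J(A)$. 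You avoid that entire inductive argument by importing Lemma~\ref{LemmaIrr}, which the paper does not use here: since each $a_i\in S$ acts as a scalar on every irreducible $A$-module, the image $\bar a_i$ is central in $A/J(A)$, so $\overline W$ sits inside $Z(A/J(A))\cong F^N$ and is therefore reduced, commutative, and (being Artinian over an algebraically closed field) isomorphic to $F^k$; this identifies $J(W)=W\cap J(A)$ with no further work, after which idempotent lifting modulo the nilpotent ideal $J(W)$ does exactly what the paper's Wedderburn--Malcev decomposition does. Your Peirce-decomposition check of diagonalizability is also fine and is essentially the same observation the paper makes (a linear combination of commuting idempotents is diagonalizable). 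The trade-off: the paper's argument is self-contained in the sense that it relies only on $[A,R]\subseteq J(A)$, whereas yours leans on the stronger structural fact (Schur's lemma via Lemma~\ref{LemmaIrr}) that $S$ acts scalarly on simple modules; in this paper that fact is available, so your route is a real shortening.

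One phrasing slip you should repair: ``its Jacobson radical is trivial because $\overline W$ embeds into a semisimple algebra'' is not a valid inference as stated --- a subalgebra of a semisimple algebra can perfectly well have nonzero radical (e.g.\ the algebra generated by a single nonzero nilpotent matrix inside $M_2(F)$). What actually makes the argument work, and what you in fact established in the previous sentence, is that $\overline W$ embeds into the \emph{centre} $Z(A/J(A))$, which is a finite product of copies of $F$ and hence reduced; a subalgebra of a reduced commutative ring has no nonzero nilpotents, and a finite-dimensional commutative $F$-algebra without nilpotents has zero radical. Once you replace the appeal to ``embeds into a semisimple algebra'' by ``embeds into a reduced commutative ring,'' the proof is watertight.
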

\begin{proof}
The solvable Lie algebra $R+J(A)$ acts on the algebra $\tilde A=A+F\cdot 1$
by left multiplication. In virtue of the Lie theorem, there exists
 a basis in $\tilde A$ in which all the operators from $R+J(A)$ have
 upper triangular matrices. Denote the corresponding
 embedding $A  \hookrightarrow M_m(F)$ by $\psi$.
 Here $m = \dim \tilde A$.

Let $A_1$ be the associative algebra generated by $a_1, \ldots, a_r$.
Since $\psi(R) \subseteq \mathfrak{t}_m(F)$, we have
$\psi(A_1) \subseteq UT_m(F)$. Here $UT_m(F)$ is the associative algebra
 of upper triangular matrices $m\times m$.
 There is a decomposition $UT_m(F) = Fe_{11}+Fe_{22}+\dots+Fe_{mm}+N$
 where $N = \langle e_{ij} \mid 1 \leqslant i < j \leqslant m \rangle_F$
 is a nilpotent ideal. Thus there is no subalgebras in $A_1$
 isomorphic to $M_2(F)$ and, by the Wedderburn~--- Malcev theorem,
 $A_1 = Fe_1 + \dots + Fe_t + J(A_1)$
 (the direct sum of subspaces) for some idempotents $e_i \in A_1$.
Denote for every $a_i$  its component in $J(A_1)$ by $b_i$
and its component in $Fe_1 \oplus \dots \oplus Fe_t$ by $c_i$.
Note that $e_i$ are commuting diagonalizable operators. Thus they
have a common basis of eigenvectors in $\tilde A$ and $c_i$
are commuting diagonalizable operators too.

We claim that the space $J(A_1)+J(A)$ generates a nilpotent ideal $I$ in $A$.
First, $\psi(J(A_1)), \psi(J(A)) \subseteq UT_m(F)$
and consist of nilpotent elements. Thus, the corresponding
matrices have zero diagonal elements and
$\psi(J(A_1)), \psi(J(A)) \subseteq N$.
Denote $N_k = \langle e_{ij} \mid i+k \leqslant j \rangle_F \subseteq N$.
Then $$N = N_1 \supsetneqq N_2 \supsetneqq \ldots \supsetneqq N_{m-1} \supsetneqq N_m = \lbrace 0\rbrace.$$
Let $\height_N a = k$ if $\psi(a) \in N_k$, $\psi(a) \notin N_{k+1}$.

Recall that $(J(A))^p =0$.
We claim that $I^{m+p} = 0$.
 The space $I^{m+p}$
 is a span of $h_1 j_1 h_2 j_2 \ldots j_{m+p} h_{m+p+1}$
 where $j_k \in J(A_1) \cup J(A)$ and $h_k \in A \cup \lbrace 1\rbrace$.
 If at least $p$ elements $j_k$ belong to $J(A)$,
 then the product equals $0$. Thus we may assume that at least
 $m$ elements~$j_k$ belong to $J_1(A)$.

 Let $j_i \in J(A_1)$, $h_i \in A \cup \lbrace 1\rbrace$.
We prove by induction on $\ell$ that
$j_1 h_1 j_2 h_2 \ldots h_{\ell-1} j_{\ell}$
can be expressed as a sum of $\tilde j_1 \tilde j_2 \ldots \tilde j_\alpha j'_1 j'_2\ldots j'_\beta
a$
where $\tilde j_i \in J(A_1)$, $j'_i \in J(A)$,
$a \in A \cup \lbrace 1\rbrace$,
 and $\alpha+\sum_{i=1}^\beta \height_N j'_i \geqslant \ell$.
 Indeed, suppose that $j_1 h_1 j_2 h_2 \ldots h_{\ell-2} j_{\ell-1}$
 can be expressed as a sum of $\tilde j_1 \tilde j_2 \ldots \tilde j_\gamma j'_1 j'_2\ldots j'_\varkappa
a$
where $\tilde j_i \in J(A_1)$, $j'_i \in J(A)$,
$a \in \tilde A$,
 and $\gamma+\sum_{i=1}^\varkappa \height_N j'_i \geqslant \ell-1$.
 Then
 $j_1 h_1 j_2 h_2 \ldots j_{\ell-1} h_{\ell-1}j_{\ell}$
 is a sum of
 $$\tilde j_1 \tilde j_2 \ldots \tilde j_\gamma j'_1 j'_2\ldots j'_\varkappa
a h_{\ell-1}j_{\ell} =
\tilde j_1 \tilde j_2 \ldots \tilde j_\gamma j'_1 j'_2\ldots j'_\varkappa
[ah_{\ell-1}, j_{\ell}] + \tilde j_1 \tilde j_2 \ldots \tilde j_\gamma j'_1 j'_2\ldots j'_\varkappa
j_{\ell} (a h_{\ell-1}).$$
Note that, in virtue of the Jacobi identity
and Lemma~\ref{LemmaLR},  $[ah_{\ell-1}, j_{\ell}] \in
J(A)$. Thus it is sufficient to consider only the second term
in the right hand side.
However
$$\tilde j_1 \tilde j_2 \ldots \tilde j_\gamma j'_1 j'_2\ldots j'_\varkappa
j_{\ell} (a h_{\ell-1})
= \tilde j_1 \tilde j_2 \ldots \tilde j_\gamma j_{\ell} j'_1 j'_2\ldots j'_\varkappa
 (a h_{\ell-1})+$$ $$\sum_{i=1}^{\varkappa}
\tilde j_1 \tilde j_2 \ldots \tilde j_\gamma  j'_1 j'_2\ldots j'_{i-1}[j'_{i}, j_\ell]
j'_{i+1}\ldots j'_\varkappa (a h_{\ell-1}).$$
Since $[j'_{i}, j_\ell] \in J(A)$ and $\height_N [j'_{i}, j_\ell] \geqslant 1+ \height_N j'_i$,
all the terms have  the desired form.
 Therefore, $$j_1 h_1 j_2 h_2 \ldots j_{m-1} h_{m-1}j_{m}
 \in \psi^{-1}(N_m) = \lbrace 0 \rbrace,$$ $I^{m+p}=0$, and $$
 J(A) \subseteq J(A_1)+J(A) \subseteq I \subseteq J(A).$$ In particular,
$b_i \in J(A_1) \subseteq J(A)$.
\end{proof}

\begin{lemma}\label{LemmaciProperties}
There exist complementary subspaces $I_k=
\tilde T_k \oplus J_k$ such that
\begin{enumerate}
\item $\tilde T_k$ is an irreducible $G\oplus \langle c_1, \ldots, c_r \rangle_F$-submodule;
\item $c_i$ acts on each $\tilde T_k$ as a scalar operator;
\item $\tilde T_k$ is a faithful $G_k\oplus \langle c_k \rangle_F$-module. Moreover,
$\sum\limits_{k=1}^r\dim (G_k\oplus \langle c_k \rangle_F) = d$;
\item $G_i \tilde T_k = c_i \tilde T_k = 0$ for $i > k$.
\end{enumerate}
\end{lemma}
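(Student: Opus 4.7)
The plan is to extract $\tilde T_k$ from the intersection of a joint eigenspace decomposition of $c_1,\ldots,c_r$ with a $G$-invariant splitting. As preliminaries, observe that each $c_i$ commutes with every element of $G$: by Lemma~\ref{LemmaJordanDecomp} the element $c_i$ is a polynomial in $a_1,\ldots,a_r\in S$, and $[G,S]=0$ by Lemma~\ref{LemmaRS}. Combined with Lemma~\ref{LemmaIrr} (which says $I_k/J_k$ is either an irreducible $G$-module or one-dimensional), Schur's lemma forces each $c_i$ to act on $I_k/J_k$ as a scalar $\lambda_{k,i}$.

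Since the $c_i$ are pairwise commuting diagonalizable operators on $A$ (acting by left multiplication), they yield a joint eigenspace decomposition $A=\bigoplus_\mu A^\mu$ indexed by tuples $\mu\in F^r$. Both $I_k$ and $J_k$ are left ideals, hence stable under each $c_i$, and therefore inherit the decomposition: $I_k=\bigoplus_\mu I_k^\mu$ and $J_k=\bigoplus_\mu J_k^\mu$ with $I_k^\mu=I_k\cap A^\mu$. The scalar action of $c_i$ on $I_k/J_k$ implies $I_k^\mu=J_k^\mu$ for every $\mu\ne\Lambda_k:=(\lambda_{k,1},\ldots,\lambda_{k,r})$. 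Because $G$ commutes with the $c_i$'s, the subspaces $I_k^{\Lambda_k}$ and $J_k^{\Lambda_k}$ are $G$-submodules; using semisimplicity of $G$, I would pick a $G$-stable complement $\tilde T_k$ of $J_k^{\Lambda_k}$ inside $I_k^{\Lambda_k}$. A direct bookkeeping on the eigenspace decomposition then gives $I_k=\tilde T_k\oplus J_k$, and by construction $\tilde T_k$ is $G$-stable with each $c_i$ acting on it as the scalar $\lambda_{k,i}$.

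To verify the four properties: (1) follows since $\tilde T_k\cong I_k/J_k$ as $G$-modules, hence is irreducible (or one-dimensional), and the $c_i$ acting as scalars preserve every $G$-submodule, so $\tilde T_k$ is irreducible as a $G\oplus\langle c_1,\ldots,c_r\rangle_F$-module. (2) is by construction. For (4), since $I_k/J_k$ is an irreducible $A$-module we have $J(A)I_k\subseteq J_k$, hence $b_i\in J(A)$ satisfies $b_iI_k\subseteq J_k$; together with $a_iI_k\subseteq J_k$ and $G_iI_k\subseteq J_k$ for $i>k$ (Lemma~\ref{LemmaChooseai}), this gives $c_i\tilde T_k\subseteq J_k\cap\tilde T_k=0$ and $G_i\tilde T_k=0$. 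For (3), the annihilation of $I_k/J_k$ by $b_k\in J(A)$ means $c_k$ and $a_k$ act identically on $I_k/J_k$, so $G_k\oplus\langle c_k\rangle_F$ acts on $\tilde T_k$ through the same operators as $G_k\oplus\langle a_k\rangle_F$ acts on $I_k/J_k$; faithfulness from Lemma~\ref{LemmaChooseai} therefore transfers, and by uniqueness of the Jordan decomposition $c_k=0$ iff $a_k=0$, so the dimensions match and $\sum_k\dim(G_k\oplus\langle c_k\rangle_F)=d$.

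The main obstacle is arranging a complement that is simultaneously compatible with the $G$-action and the $c_i$-eigenspace decomposition; the key point enabling this is the commutativity $[G,c_i]=0$, which makes the two decompositions respect one another and allows a single coherent choice of splitting.
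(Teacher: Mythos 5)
Your argument is correct and follows essentially the same route as the paper: decompose $A$ into joint eigenspaces of the commuting diagonalizable $c_i$ (which are $G$-stable because $c_i$ is a polynomial in elements of $S$ and $[G,S]=0$), pick a $G$-stable complement of $J_k$ inside $I_k$ lying in the appropriate eigenspace, and verify the four properties by transferring the scalar action of each $a_i$ on $I_k/J_k$ to the action of $c_i$ on $\tilde T_k$. The only cosmetic difference is in Property 3, where you deduce faithfulness and $G_k\cap\langle c_k\rangle_F=0$ by a dimension-count on the image, while the paper invokes that a semisimple $G_k$ has no nonzero scalar operators in its homomorphic images; both hinge on the same fact that $c_k$ and $a_k$ induce identical operators.
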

\begin{proof}
The elements $c_i$ are diagonalizable on $A$ and commute. Therefore, an eigenspace
of any $c_i$ is invariant under the action of other $c_k$.
Using induction, we split $A = \bigoplus_{i=1}^\alpha W_i$
where $W_i$ are intersections of eigenspaces of $c_k$
and elements $c_k$ act as scalar operators on $W_i$.
In virtue of Lemma~\ref{LemmaLR} and the Jacobi identity,
 $[c_i, G]=0$. Thus $W_i$ are $G$-submodules
 and $A$ is a completely reducible
 $G\oplus \langle c_1, \ldots, c_r \rangle_F$-module.
  Therefore, for every $J_k$ we can choose
  a complementary  $G\oplus \langle c_1, \ldots, c_r \rangle_F$-submodules
  $\tilde T_k$ in $I_k$.
Then $\tilde T_k \cong I_k/J_k$  is an irreducible $G$-module
by Lemma~\ref{LemmaIrr}. Property 1 is proven.

By Lemma~\ref{LemmaIrr}, for every $a_i$ there exists
$\gamma \in F$ such that for every $h \in I_k$
we have $a_i h = \gamma h + j$ where
$j \in J_k$.
Thus $c_i h  = a_i h - b_i h = \gamma h + (j-b_i h)$
where $j - b_i h \in J_k$. However, $c_i h \in \tilde T_k$
for $ h \in \tilde T_k$,
and $j-b_i h = 0$. Property 2 is proven.

By Lemma~\ref{LemmaChooseai}, each $I_i/J_i$ is a faithful $G_i\oplus\langle a_i\rangle_F$-module
and either $a_i=0$ or $a_i$ acts on $I_i/J_i$ by a nonzero scalar operator.
Thus  either $a_i=c_i=0$ or $a_i \ne 0$ and $c_i \ne 0$ acts on $\tilde T_i$ by a nonzero scalar operator
that belongs to the center of $\End_F \tilde  T_i$.
Since all homomorphic images of $G_i$ have zero center,
each $\tilde T_i$ is a faithful $G_i\oplus\langle c_i\rangle_F$-module
and $\dim (G_k\oplus \langle c_k \rangle_F) = \dim (G_k\oplus \langle a_k \rangle_F)$.
Property 3 is proven.

Property 4 is a consequence of Property
5 of Lemma~\ref{LemmaChooseai}.
\end{proof}


\begin{lemma}\label{ChooseSubmodule}
Let $M$ be a finite dimensional irreducible module
over a semisimple Lie algebra $G$
and $\tilde G$ be a Lie ideal of $G$
such that $M$ is a faithful $\tilde G$-module. Then there exist
faithful irreducible $\tilde G$-submodules
$\tilde M_i \subseteq M$
such that $M = \tilde M_1 \oplus \tilde M_2 \oplus \dots
\oplus \tilde M_k$.
\end{lemma}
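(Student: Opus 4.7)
The plan is to produce the decomposition via Weyl's complete reducibility theorem applied to $\tilde G$, show that the resulting isotypic pieces fill out all of $M$, and then verify faithfulness by an annihilator argument. Since $G$ is semisimple and $\tilde G$ is an ideal of $G$, we may fix a complementary ideal $G'$ with $G = \tilde G \oplus G'$ as a Lie algebra (e.g., using orthogonality with respect to the Killing form). In particular, $[\tilde G, G'] \subseteq \tilde G \cap G' = 0$, so the $\tilde G$- and $G'$-actions on $M$ commute. Because $\tilde G$ is semisimple (being an ideal of a semisimple algebra), Weyl's theorem yields a decomposition $M = \tilde M_1 \oplus \cdots \oplus \tilde M_k$ into irreducible $\tilde G$-submodules.

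Next I would show this decomposition is \emph{isotypic}. Pick any irreducible $\tilde G$-submodule $M_0 \subseteq M$ and let $N \subseteq M$ be the sum of all irreducible $\tilde G$-submodules isomorphic to $M_0$. By construction $N$ is $\tilde G$-invariant. Since every $g' \in G'$ commutes with the $\tilde G$-action, left multiplication by $g'$ is a $\tilde G$-module homomorphism $M \to M$, and such a map sends the $M_0$-isotypic component into itself. Hence $N$ is $G'$-invariant, and therefore $G = \tilde G \oplus G'$-invariant. By irreducibility of $M$ over $G$, we must have $N = M$, so every $\tilde M_i$ is isomorphic to $M_0$.

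The final step is to verify that $M_0$, and hence each $\tilde M_i$, is a faithful $\tilde G$-module. Since $M$ is a direct sum of copies of $M_0$ as a $\tilde G$-module, $K := \Ann_{\tilde G}(M_0) = \Ann_{\tilde G}(M)$, an ideal of $\tilde G$. I claim $K$ is in fact an ideal of all of $G$: for any $g \in G$ and $x \in K$, the element $[g,x]$ lies in $\tilde G$ because $\tilde G$ is an ideal, and for every $m \in M$ one has $[g,x]m = g(xm) - x(gm) = 0 - 0 = 0$, using $xM=0$. Hence $K$ is a $G$-ideal with $KM=0$, and the faithfulness of $M$ over $G$ forces $K=0$.

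The main obstacle is the isotypicity step: one must convert the abstract fact that $M$ is $G$-irreducible into a statement about $\tilde G$-isotypic components, and the key trick is to use the complementary ideal $G'$ as a supply of $\tilde G$-intertwining operators on $M$. Once isotypicity is in hand, the faithfulness of the individual $\tilde M_i$ is a short argument because annihilators of $\tilde G$-modules coming from an ideal automatically propagate to $G$-ideals.
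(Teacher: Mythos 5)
Your proof is correct, but it takes a genuinely different route from the paper's. The paper invokes a factorization theorem for irreducible modules over a direct sum of semisimple Lie algebras (Goto--Grosshans, Proposition 7.3.1$'$) to write $M \cong \tilde M \otimes \bar M$ with $\tilde M$ an irreducible faithful $\tilde G$-module and $\bar M$ an irreducible faithful $\bar G$-module; the desired decomposition $M = \bigoplus_i (\tilde M \otimes m_i)$ and its faithfulness then drop out at once. You instead start from Weyl's complete reducibility to decompose $M$ over $\tilde G$, and then supply the two missing ingredients by hand: isotypicity, obtained by observing that left multiplication by elements of the complementary ideal $G'$ gives $\tilde G$-intertwiners that preserve the isotypic component (so $G$-irreducibility forces that component to be all of $M$), and faithfulness, obtained by showing $\Ann_{\tilde G}(M_0) = \Ann_{\tilde G}(M)$ is automatically a $G$-ideal annihilating $M$, hence zero. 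Your version is more elementary and self-contained (it needs only Weyl's theorem and basic facts about isotypic components), while the paper's is shorter because the external structure theorem packages the tensor factorization, the isotypicity, and the faithfulness of each factor into a single citation. As a minor note, your argument actually reproves the tensor-factorization statement implicitly: once you know $M$ is $\tilde G$-isotypic of type $M_0$, the space $\operatorname{Hom}_{\tilde G}(M_0, M)$ is an irreducible faithful $G'$-module and $M \cong M_0 \otimes \operatorname{Hom}_{\tilde G}(M_0, M)$, which is exactly the content of the cited proposition.
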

\begin{proof}
There exists a semisimple ideal $\bar G$ such that $G = \tilde G \oplus \bar G$.
By~\cite[Proposition 7.3.1']{GotoGrosshans}, $M \cong \tilde M \otimes \bar M$
for some irreducible $\tilde G$-module $\tilde M$
and irreducible $\bar G$-module $\bar M$. Let $m_1, \ldots, m_k$ be basis elements
of~$\bar M$.
Then $M = (\tilde M \otimes m_1)\oplus(\tilde M \otimes m_2)\oplus\dots\oplus
(\tilde M \otimes m_k)$ is the direct sum of irreducible $\tilde G$-modules.
These  submodules are faithful
since $M$ is faithful.
\end{proof}

 By Condition 2 of the definition of $d(\rho)$,
 there exist elements $q_1$, \ldots, $q_{r-1} \in A \cup \{1\}$
such that $\tilde T_1 q_1 \tilde  T_2 \ldots \tilde  T_{r-1} q_{r-1}
\tilde  T_r \ne 0$.
 Choose $n_i \in \mathbb Z_+$ with the maximal $\sum\limits_{i=0}^{r-1} n_i$ such that
$$\left(\prod_{k=1}^{n_0} j_{0k}\right)\tilde T_1 q_1
 \left(\prod_{k=1}^{n_1} j_{1k}\right) \tilde T_2
  \ldots q_{r-1} \left(\prod_{k=1}^{n_{r-1}} j_{r-1,k}\right) \tilde T_r \ne 0
$$ for some $j_{ik}\in J(A)$.
Denote $\left( q_i \prod_{k=1}^{n_i} j_{ik} \right)$ again by
$q_i$ for $1 \leqslant i \leqslant r-1$
and $\prod_{k=1}^{n_0} j_{0k}$ by $q_0$.
Then $q_i  \in A \cup \{1\}$ and
$$q_0 \tilde T_1 q_1 \tilde T_2 \ldots \tilde T_{r-1} q_{r-1}
\tilde T_r \ne 0,$$ but
\begin{equation}\label{EquationJZero}
q_0 \tilde T_1 q_1 \tilde  T_2 \ldots \tilde  T_{k-1} q_{k-1} (j \tilde  T_{k}) q_{k} \tilde T_{k+1} \ldots \tilde  T_{r-1} q_{r-1}
\tilde T_r = 0\end{equation}
for all $j \in J(A)$ and $1 \leqslant k \leqslant r$.

By Lemma~\ref{ChooseSubmodule}, for every $k$ we can choose a
   faithful irreducible
$G_k \oplus \langle c_k \rangle_F$-submodule $T_k \subseteq \tilde T_k$
such that
\begin{equation}\label{EquationqNonZero}q_0 T_1 q_1 T_2 \ldots T_{r-1} q_{r-1}
T_r \ne 0.\end{equation}

\begin{lemma}\label{LemmaChange}
Let $\varphi \colon \bigoplus_{i=1}^r(G_i \oplus \langle a_i \rangle) \to
\bigoplus_{i=1}^r(G_i \oplus \langle c_i \rangle) $
be the isomorphism defined by formulas $\varphi(g)=g$ for
all $g \in G_i$ and $\varphi(a_i)=c_i$.
Let $f_i$ be multilinear polynomials and $h^{(i)}_1, \ldots, h^{(i)}_{n_i}
\in \bigoplus_{k=1}^r(G_k \oplus \langle a_k \rangle_F)$, $t_i \in
\tilde T_i$ be some elements.
Then $$q_0\,f_1(h^{(1)}_1, \ldots, h^{(1)}_{n_1})\, t_1\
q_1\, f_2(h^{(2)}_1, \ldots, h^{(2)}_{n_2})\, t_2
\ldots q_{r-1}\, f_r(h^{(r)}_1, \ldots, h^{(r)}_{n_r})\, t_r
=$$
$$q_0\, f_1(\varphi(h^{(1)}_1), \ldots, \varphi(h^{(1)}_{n_1}))\, t_1\
q_1\, f_2(\varphi(h^{(2)}_1), \ldots, \varphi(h^{(2)}_{n_2}))\, t_2$$ $$
\ldots q_{r-1}\, f_r(\varphi(h^{(r)}_1), \ldots, \varphi(h^{(r)}_{n_r}))\, t_r.
$$ In other words, we can replace $a_i$ with $c_i$ and the result does not change.
\end{lemma}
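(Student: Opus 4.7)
My plan is to expand both sides by multilinearity, isolate the RHS as the ``pure $c_i$'' contribution, and show that every remaining term vanishes. Using Lemma~\ref{LemmaJordanDecomp} write $a_i = b_i + c_i$ with $b_i \in J(A)$. Each $h^{(i)}_\ell \in G_i \oplus \langle a_i\rangle_F$ can be expressed as $h^{(i)}_\ell = g^{(i)}_\ell + \alpha^{(i)}_\ell a_i = \varphi_i(h^{(i)}_\ell) + \alpha^{(i)}_\ell b_i$ for some $g^{(i)}_\ell \in G_i$ and $\alpha^{(i)}_\ell \in F$. Expanding each multilinear polynomial $f_i$ as a sum of noncommutative monomials and applying distributivity to the substitutions, the LHS becomes the RHS plus a sum of ``error terms'' in each of which, for at least one index $k$ and at least one position in $f_k$, the element $b_k$ has been substituted.

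The heart of the argument is to show that every such error term is zero. Fix one error term and concentrate on an index $k$ at which $f_k$ received a $b_k$-substitution. In each monomial $w_1 \cdots w_{n_k}$ arising in the expansion of $f_k$ (with $w_\ell \in G_k \cup \{c_k, b_k\}$ and at least one $w_\ell = b_k$), split at the \emph{rightmost} occurrence of $b_k$, say at position $p$. The suffix $w_{p+1}\cdots w_{n_k}$ is then a product of elements from the subalgebra generated by $G_k \oplus \langle c_k\rangle_F$, which by Lemma~\ref{LemmaciProperties}(1) preserves $\tilde T_k$; thus $t'_k := w_{p+1}\cdots w_{n_k}\, t_k \in \tilde T_k$. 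Because $I_k/J_k$ is annihilated by $J(A)$ (it is either an irreducible $A$-module, so the Jacobson radical kills it, or it is one-dimensional and $J(A)$ must act trivially by nilpotence), we have $b_k t'_k \in J(A)\tilde T_k \subseteq J_k$. Setting $j := (w_1\cdots w_{p-1})\,b_k \in A\cdot J(A) \subseteq J(A)$, the evaluation of that monomial applied to $t_k$ equals $j\cdot t'_k$ with $j \in J(A)$ and $t'_k \in \tilde T_k$.

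Plugged back into the full product, the $k$-th slot of the error term becomes $j \cdot t'_k$ with $j \in J(A)$ while the other slots (if they carry no $b_i$-substitution) lie in $\tilde T_i$ by the same $G_i \oplus \langle c_i\rangle_F$-invariance. To finish, I would invoke the vanishing relation~(\ref{EquationJZero}) applied at the smallest offending index, after first decomposing $\tilde T_k$ into $G_k \oplus \langle c_k\rangle_F$-irreducible summands (as furnished by Lemma~\ref{ChooseSubmodule}) so as to reduce each $t'_k$ to an element of a $T_k$-type submodule. If several slots are erroneous simultaneously, the argument is applied inductively at each such slot. The main obstacle is precisely this last bridge: passing from the naturally occurring $t'_k \in \tilde T_k$ back to the setting of~(\ref{EquationJZero}), which is formulated for the smaller submodules $T_k$ on which~(\ref{EquationqNonZero}) was normalized.
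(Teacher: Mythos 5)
Your strategy is exactly the paper's: expand $a_i = b_i + c_i = b_i + \varphi_i(a_i)$ by multilinearity, identify the right-hand side as the ``all-$c_i$'' term, and kill every summand containing some $b_i$ via~(\ref{EquationJZero}). The paper's published proof is a two-line version of this; your unwinding of why each error term lands in the scope of (\ref{EquationJZero}) --- split each monomial at the \emph{rightmost} $b_k$, so the prefix sits in $J(A)$ and the suffix consists only of elements of $G_k\cup\langle c_k\rangle_F$ --- is the correct and necessary fleshing-out. (The aside that $b_k t'_k \in J_k$ is not actually used in the rest of your argument and can be dropped.)

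The ``main obstacle'' you flag ($t'_k\in\tilde T_k$ versus $T_k$) is real on the face of the statement but easier to resolve than you suggest. The lemma is invoked in the proof of Lemma~\ref{LemmaAlt} only with $t_i\in T_i$, and $T_k$ is, by its construction via Lemma~\ref{ChooseSubmodule}, itself an irreducible $G_k\oplus\langle c_k\rangle_F$-submodule, hence invariant under the associative algebra those elements generate. So for the case that is actually needed, the suffix $w_{p+1}\cdots w_{n_k}t_k$ stays in $T_k$ (not merely in $\tilde T_k$), and no detour through a further decomposition of $\tilde T_k$ is needed; decomposing $\tilde T_k$ into irreducibles would in any case hand you summands that need not coincide with the \emph{specific} $T_k$ for which $q_0,\ldots,q_{r-1}$ were normalized, which is why your proposed fix does not close. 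The other point you leave hanging --- several slots carrying a $b_i$ simultaneously --- is also glossed over by the paper; the quick way to see it is that the $q_i$ are chosen so that (\ref{EquationqNonZero}) is maximal with respect to inserting $J(A)$-factors, so inserting more than one such factor a~fortiori gives zero. It would be worth stating that maximality explicitly rather than appealing to an unspecified ``induction at each slot.''
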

\begin{proof}
We rewrite $a_i=b_i+c_i=b_i+\varphi(a_i)$ and use the multilinearity
of $f_i$. By~(\ref{EquationJZero}), terms with $b_i$ vanish.
\end{proof}

Let $mk \leqslant n$ where $m,k,n \in \mathbb N$ are some numbers.
 Denote by $Q_{m,k,n} \subseteq P_n$
the subspace spanned by all polynomials that are alternating in
$k$ disjoint subsets of variables $\{x^i_1, \ldots, x^i_m \}
\subseteq \lbrace x_1, x_2, \ldots, x_n\rbrace$, $1 \leqslant i \leqslant k$.

We need the following theorem~\cite[Remark~12.1]{Razmyslov}.

\begin{theorem}[Yu.\,P.~Razmyslov]\label{Razmyslov}
Let $B$ be a reductive Lie algebra over an algebraically closed field
of characteristic zero, and let $\dim B = m$. Let $\tau \colon B \hookrightarrow U$ be an
embedding of $B$ to a simple associative algebra $U$ generated by $\tau(B)$, with a nonzero
center. Then there exist $k$ and a multilinear
polynomial $f \in Q_{m,k,mk} \backslash \Id(\tau)$
such that $f(\tau(p_1), \ldots, \tau(p_{mk}))$ belongs to the center of $U$ for all $p_i \in B$.
\end{theorem}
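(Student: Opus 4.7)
The plan is to reduce the problem to the matrix algebra $M_n(F)$ and then invoke Razmyslov's machinery of central polynomials obtained from the Cayley--Hamilton identity. First, I would show that $U \cong M_n(F)$ for some $n$. Since $B$ is finite dimensional, its adjoint action makes $U$ into a locally finite $B$-module (every product of generators lies in a finite-dimensional $B$-invariant subspace); combined with simplicity and the hypothesis that $Z(U)\ne 0$, one argues $U$ is finite dimensional over its center. Because $Z(U)$ is a field extension of the algebraically closed field $F$ and $U$ is a finitely generated $Z(U)$-algebra, $Z(U)=F$. By the Wedderburn--Artin theorem, $U\cong M_n(F)$.

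Next I would construct the desired polynomial. Fix a basis $e_1,\ldots,e_m$ of $B$. The multilinearization of the Cayley--Hamilton identity for $M_n(F)$ yields a multilinear identity $h(x_0,x_1,\ldots,x_n)$ of $M_n(F)$ that is alternating in the variables $x_1,\ldots,x_n$ and linear in $x_0$. Applying Razmyslov's transform ("cutting" the identity by stripping off $x_0$ and interpreting the coefficient as an operator) produces a multilinear central polynomial for $M_n(F)$ that inherits alternation in $n$ of its variables. Iterating this construction on several disjoint blocks of variables produces, for any desired number of alternating groups, a multilinear polynomial in $Q_{m,k,mk}$ (after inflating the number of alternated variables in each group from $n$ up to $m$ by inserting dummy commutators with the remaining generators of $B$) whose values on $M_n(F)$ lie in $Z(U)=F$.

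It remains to verify that such an $f$ is not an identity of $\tau$. For this, I would substitute $e_1,\ldots,e_m$ into each of the $k$ alternating groups of $f$. By alternation, this gives $(m!)^k$ times the value of $f$ on the specific tuple of basis vectors, so it suffices to exhibit one assignment of monomials in $e_1,\ldots,e_m$ (namely the one appearing in the central polynomial construction) that yields a nonzero central element of $U$. Here is where the reductivity of $B$ enters: writing $B=Z(B)\oplus[B,B]$ with $[B,B]$ semisimple, $\tau(B)$ generates $U=M_n(F)$, so the nondegeneracy of the Killing form on $[B,B]$ combined with the density of $\tau(B)$ in $M_n(F)$ guarantees that the specific monomial evaluated by $f$ cannot identically vanish.

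The main obstacle is the last step: producing a central polynomial whose alternating structure is matched exactly to the basis of $B$ and verifying that its restriction to $\tau(B)$ is nonzero. A priori, a central polynomial of $M_n(F)$ may vanish on a generating Lie subalgebra, so the construction must be carried out \emph{relative to} the embedding $\tau$, exploiting both the Razmyslov transform and the decomposition of the reductive algebra $B$. This is precisely the technical content carried out in Razmyslov's book, and I would refer to~\cite[Remark~12.1]{Razmyslov} for the detailed combinatorial verification.
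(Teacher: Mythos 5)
This theorem is not proved in the paper: it is quoted from \cite[Remark~12.1]{Razmyslov} and used as a black box (in Lemma~\ref{LemmaAlt}, with $U=\End_F(T_i)$ a full matrix algebra). So there is no in-paper argument to compare against, and the only question is whether your sketch holds up on its own. Your overall orientation — a central polynomial built by Razmyslov-type machinery, alternation, exploiting reductivity of $B$ — is in the right ballpark, but two of the intermediate steps are wrong or misleading. First, ``locally $\ad B$-finite, simple, $Z(U)\neq 0$, hence $U$ is finite dimensional'' is not a valid inference: generic simple quotients $U(\mathfrak{sl}_2)/(C-\lambda)$ of the enveloping algebra are locally $\ad$-finite, simple, have center $F$, and are infinite dimensional. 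In Razmyslov's book, and in this paper's application of the result, the finite-dimensionality of $U$ is part of the ambient setting, not something to be derived from the hypotheses you list.

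Second, and more importantly, the construction you outline does not produce the right kind of polynomial. Multilinearized Cayley--Hamilton and the Razmyslov transform give central polynomials for $M_n(F)$ alternating in blocks whose size is governed by $n$; the theorem needs alternation in blocks of size $m=\dim B$, which bears no fixed relation to $n$ and is typically much \emph{smaller} (take $B=\mathfrak{sl}_2$, $m=3$, inside $M_n(F)$ for arbitrary $n$), so ``inflating with dummy commutators'' cannot bridge the gap — there is nothing to inflate. Likewise, verifying that the value on $\tau(B)$ is a nonzero scalar is precisely the nontrivial point, not something that follows from density plus nondegeneracy of the Killing form. What Razmyslov actually does is build the central element \emph{relative to the pair} $(U,\tau(B))$, via his theory of weak (trace) identities of a representation, which is a genuinely different mechanism from specializing an absolute central polynomial of $M_n(F)$. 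You concede at the end that the real work lies in \cite{Razmyslov}; that puts you in the same position as the paper and is an acceptable out, but the sketch as written, if pursued literally, would not close.
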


\begin{lemma}\label{LemmaAlt} If $d \ne 0$, then there exist numbers $k, n_0 \in \mathbb N$ such that for every $n\geqslant n_0$
there exist disjoint subsets $X_1$, \ldots, $X_{k\ell} \subseteq \lbrace x_1, \ldots, x_n
\rbrace$, $\ell = \left[\frac{n-n_0}{kd}\right]$,
$|X_1| = \ldots = |X_{k\ell}|=d$ and a polynomial $f \in P_n \backslash
\Id(\rho)$ alternating on the variables of each set $X_j$.
\end{lemma}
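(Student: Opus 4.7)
The plan is to build $f$ by gluing $r$ Razmyslov-type polynomials, one for each block $B_k := G_k \oplus \langle c_k \rangle_F$, via the elements $q_i$ and the vectors $t_k \in T_k$. First, for each $k \in \{1, \ldots, r\}$ I apply Theorem~\ref{Razmyslov} to the faithful action of the reductive Lie algebra $B_k$ (of dimension $d_k := \dim(G_k \oplus \langle c_k\rangle_F)$) on the irreducible module $T_k$: by Burnside's theorem the associative envelope of $B_k$ in $\End_F T_k$ is $\End_F T_k$ itself, which is simple with nonzero center, so Razmyslov supplies an integer $N_k$ and a polynomial $f_k \in Q_{d_k, N_k, d_k N_k}$, not an identity of the inclusion $B_k \hookrightarrow \End_F T_k$, whose every substitution from $B_k$ acts as a scalar on $T_k$; the scalar $\gamma_k$ can be made nonzero by choice of substitution.

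Set the lemma constant to $k := N_1 N_2 \cdots N_r$. For each $\ell$, let $\hat f_k$ denote the concatenation (with fresh variables) of $k\ell / N_k$ disjoint copies of $f_k$; then $\hat f_k$ is multilinear in $k\ell d_k$ variables, alternates in $k\ell$ sets of size $d_k$, and still evaluates to a nonzero scalar on $T_k$. Write each $q_i$ and each chosen $t_k$ as a monomial in $\rho(L)$ using $n_0$ fresh variables in total, and let $Q_i$ and $T_k$ denote these monomials. Form
\[
\Phi \ :=\ Q_0 \,\hat f_1\, T_1 \,Q_1\, \hat f_2 \,T_2 \,\cdots\, Q_{r-1}\, \hat f_r\, T_r.
\]
Denote by $X_\nu^{(k)}$ the $\nu$-th alternating set inside $\hat f_k$ and set $X_\nu := \bigsqcup_{k=1}^r X_\nu^{(k)}$, of size $d$. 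Define $f := \Alt_{X_1} \Alt_{X_2} \cdots \Alt_{X_{k\ell}} \Phi$; this is multilinear in $k\ell d + n_0$ variables, alternating in $k\ell$ disjoint sets of size $d$. For general $n \geqslant n_0$ one takes $\ell := \lfloor (n - n_0)/(kd) \rfloor$ and pads with dummy variables (substituted by any fixed non-annihilating element of $\rho(L)$) to reach exactly $n$ variables.

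To see $f \notin \Id(\rho)$, substitute a basis of $G_k \oplus \langle a_k\rangle_F$ into the variables of each $X_\nu^{(k)}$ (using $a_k \in \rho(L)$ in place of $c_k$) and the glue variables by elements of $L$ realizing $Q_i = q_i$ and $T_k = t_k$; Lemma~\ref{LemmaChange} lets us replace $a_k$ by $c_k$ throughout the computation. The crucial combinatorial claim is that only \emph{block-respecting} permutations $\sigma$, namely those preserving each $X_\nu^{(k)}$ setwise, contribute to $\Alt_{X_\nu}$. If $\sigma$ is not block-respecting, let $k^*$ be the largest $k$ with $\sigma(X_\nu^{(k)}) \neq X_\nu^{(k)}$: some $y \in X_\nu^{(k^*)}$ is then sent into a slot of $\hat f_j$ with $j < k^*$, and its substituted value $b \in B_{k^*}$ appears inside $\hat f_j(\ldots) t_j$. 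Since $B_{k^*} \tilde T_j = 0$ for $k^* > j$ by Property~4 of Lemma~\ref{LemmaciProperties} and $\tilde T_j$ is $B$-invariant, every monomial of $\hat f_j$ annihilates $t_j$ upon substitution and the whole product vanishes. The block-respecting sum factors as $\prod_k \Alt_{X_\nu^{(k)}}$, which merges with the internal alternation of each $\hat f_k$ as a combinatorial constant, and the remaining evaluation of $\Phi$ equals $\bigl(\prod_k \gamma_k^{k\ell/N_k}\bigr) \cdot q_0 t_1 q_1 t_2 \cdots q_{r-1} t_r \ne 0$ by~(\ref{EquationqNonZero}). The main obstacle is this block-respecting argument, which depends essentially on the triangular annihilation $B_{k^*} \tilde T_j = 0$ for $k^* > j$ coming from the ordering of Lemma~\ref{LemmaChooseai}; the remaining steps are routine bookkeeping around Razmyslov's theorem, the $a_k \leftrightarrow c_k$ passage, and the arithmetic of $k$, $\ell$, and $n_0$.
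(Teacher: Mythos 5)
Your proposal follows the same strategy as the paper: apply Razmyslov's theorem to each reductive block $G_k\oplus\langle c_k\rangle_F$ acting faithfully on $T_k$, glue the resulting alternating polynomials along the chain $q_0,T_1,q_1,\dots,T_r$, alternate on the merged sets $X_\nu=\bigsqcup_k X_\nu^{(k)}$ of size $d$, and kill the non-block-respecting terms using Property~4 of Lemma~\ref{LemmaciProperties} together with Lemma~\ref{LemmaChange} for the $a_k\leftrightarrow c_k$ passage. The block-respecting combinatorics and the use of~(\ref{EquationqNonZero}) match the paper. Your replacement of the $z_i$ by explicit monomials in $\rho(L)$-variables is, if anything, a slightly cleaner phrasing of a point the paper leaves implicit.

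The one genuine gap is the padding step. You write that for general $n$ one ``pads with dummy variables (substituted by any fixed non-annihilating element of $\rho(L)$)'' to reach degree $n$. This is not automatic: if $w$ is the nonzero value produced by the core polynomial, left-multiplying by up to $kd-1$ elements of $\rho(L)$ requires $\rho(L)^\gamma w\neq 0$, and there is no a~priori reason that $w$ is not a left annihilator of $\rho(L)^\gamma$ (the image of $w$ could land in a subspace of $V$ killed by $\rho(L)$, for instance). The paper avoids this by a different device: it inserts $\alpha$ extra copies of $f_1$, which evaluate to the \emph{identity} operator on $T_1$ and hence do not disturb the product $q_0 t_1\cdots t_r$, then expands $f_1^\alpha$ into monomials (at least one of which gives a nonzero value), and truncates the leading variables of that monomial to hit degree $n$ exactly --- truncation preserves nonvanishing because a left factor of a nonzero product is nonzero. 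Your proposal needs this (or an equivalent) mechanism; as written, the existence of the dummy substitution is unjustified.
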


\begin{proof}
Recall that each $T_i$ is a faithful irreducible $G_i\oplus\langle c_i\rangle_F$-module.
Denote by $\tau_i$ the embedding $G_i\oplus\langle c_i\rangle \hookrightarrow \End_F(T_i)$.
In virtue of the density theorem~\cite[Section 4.3]{Jacobson},  the space $\tau_i(G_i\oplus\langle c_i\rangle)$
generates $\End_F(T_i)$ as an associative algebra.
The center of $\End_F(T_i)$ is a linear span of its unit $\id_{T_i}$.
In virtue of Theorem~\ref{Razmyslov}, for some $k_i \in \mathbb N$
there exists a multilinear polynomial $f_i \in Q_{d_i,k_i, k_i d_i} \backslash \Id(\rho_i)$,
$d_i = \dim (G_i\oplus\langle c_i\rangle)$, satisfying the following property.
If we substitute any elements from $\tau_i(G_i\oplus\langle c_i\rangle)$ for the variables of $f_i$,
then the result belongs to the center of $\End_F(T_i)$.
Therefore, $\tau_i(f_i(h^{(i)}_1, \ldots, h^{(i)}_{d_i k_i})) = \id_{T_i}$ for some
$h^{(i)}_\beta \in G_i\oplus\langle c_i\rangle_F$
and $f_i(h^{(i)}_1, \ldots, h^{(i)}_{d_i k_i})t_i = t_i$ for all $t_i\in T_i$. We can
consider products of several copies of $f_i$ in different variables
and assume that $k_1 = k_2 = \ldots = k_r = k$.

Equation~(\ref{EquationqNonZero}) implies
that
\begin{equation}\label{EquationqNonZero2}
 b:=q_0 t_1 q_1 t_2 \ldots q_{r-1} t_r \ne 0
 \end{equation}
for some $t_i \in T_i$. Fix these $t_i$.
Note that $q_i$ are sums of $q_{i1} \ldots q_{i\theta_i}$
and  $t_i$ are sums of $t_{i1} \ldots t_{i\xi_i}$ where $\theta_i \geqslant 0$,
$\xi_i > 0$ and  $q_{ik}, t_{ik}\in\rho(L)$. Denote the maximal
 $\theta_i$ by $\eta_i$ and the maximal
 $\xi_i$ by $\zeta_i$. Let
 $n_0=\sum_{i=1}^r (\eta_{i-1}+\zeta_i)$,
 $\ell=\left[ \frac{n-n_0}{kd}\right]$, and
 $\alpha=\left[ \frac{n-\ell kd}{kd_1}\right]+1$.

 Consider the polynomial
$\tilde f^{(\alpha)} = w_0 f_1^{\alpha}\ \tilde f$ where
$$\tilde f = (f_1^{\ell} z_1)\, w_1
(f_2^{\ell} z_2)\, w_2
(f_3^{\ell} z_3) \ldots (f_{r-1}^{\ell} z_{r-1}) w_{r-1}
(f_r^{\ell} z_r).$$
Here the variables of all copies of $f_i$ are
different and $f^{(\alpha)}$ is a multilinear polynomial of degree
$\ell k d + 2r + \alpha k d_1$.

Let $w_i=q_i$, $z_i=t_i$. Equation (\ref{EquationqNonZero2}) and the choice of $f_i$
imply that if we replace the variables of $f_i$ with $h^{(i)}_\beta$, then
the value of $\tilde f^{(\alpha)}$ equals $b$. Denote this substitution of elements by $\Delta$.

Each copy of $f_i$, $1\leqslant i \leqslant r$,
is alternating on disjoint sets $X^{i,j}_1$, \ldots, $X^{i,j}_k$
where $|X^{i,j}_1|=\ldots = |X^{i,j}_k| = d_i$
and $1 \leqslant j \leqslant \ell$ is the number of the copy of $f_i$ in $\tilde f$.
Denote $X_{(j-1)k+v} = \coprod_{i=1}^r X^{i,j}_v$ for
$1 \leqslant j \leqslant \ell$, $1 \leqslant v \leqslant k$.
Then $|X_j|=d$.
Consider the polynomial $ f^{(\alpha)} =
w_0 f_1^{\alpha}  \Alt_1 \ldots \Alt_{k\ell} \tilde f$
where the operator $\Alt_j$ alternates a polynomial on the variables of $X_j$.
We claim that $f^{(\alpha)}$ does not vanish
under the substitution $\Delta$.

Note that the alternation
does not change $z_i$. Thus Property 4 of Lemma~\ref{LemmaciProperties}
implies that all the terms of alternation
in which variables from $X^{i,j}_v$ are put on the places of
variables from $X^{i',j}_{v}$, $i > i'$, vanish.
Hence we may consider only those
terms where $X^{i,j}_v$ are invariant.
Recall that each $f_i$ is alternating on $X^{i,j}_v$.
Thus the result of the substitution
$\Delta$ for the variables of $f^{(\alpha)}$ equals
$(d_1!d_2! \ldots d_r!)^{k\ell}\, b \ne 0$.
Moreover, by Lemma~\ref{LemmaChange},
it is not important whether we replace the variables of $f_i$ with $a_i$ or $c_i$.
Now we assume that all the variables of $f_i$ are replaced with elements from
$G_i\oplus\langle a_i\rangle$.

Now we rewrite $q_i$ as sums of $q_{i1} \ldots q_{i\theta_i}$
and  $t_i$ as sums of $t_{i1} \ldots t_{i\xi_i}$ where $0 \leqslant \theta_i \leqslant \eta_i$, $0 < \xi_i \leqslant \zeta_i$ and  $q_{ik}, t_{ik}\in\rho(L)$.
Using the linearity of $f^{(\alpha)}$ in $w_i$ and $t_i$, we obtain
that $f^{(\alpha)}$ does not vanish under some substitution
$w_i=q_{i1} \ldots q_{i\theta_i}$ and $z_i=t_{i1} \ldots t_{i\xi_i}$.
Now we replace $w_i$ in $\tilde f$ with $y_{i1} \ldots y_{i\theta_i}$
(or remove $w_i$ if $\theta_i = 0$)
and $z_i$ with $v_{i1} \ldots v_{i\xi_i}$.
Denote the polynomial obtained by $\hat f$.
Then $$\hat f^{(\alpha)} = y_{01}\ldots y_{0\theta_0}
f_1^\alpha \Alt_1 \ldots \Alt_{k\ell} \hat f$$ is a multilinear polynomial of degree
$$\ell k d + \alpha k d_1 +\sum_{i=1}^r (\theta_{i-1} + \xi_i)$$
and $\hat f^{(\alpha)} \notin \Id(\rho)$.
Note that $\ell k d \leqslant \deg \hat f \leqslant n_0 + \ell k d \leqslant n$
and $\deg \hat f^{(\alpha)}  > \ell k d + \alpha k d_1 > n$.

Expand the first $\alpha$ copies of $f_1$ in $\hat f^{(\alpha)}$.
At least one of the polynomials obtained is not a polynomial identity. Denote it by
$g^{(\alpha)}$.
Remove $\left(\deg \hat f^{(\alpha)}-n\right)$ initial variables in $g^{(\alpha)}$
and denote the polynomial obtained by $f$.
Then
$$f = u_1 u_2 \ldots u_{\left(n-\deg \hat f\right)} \Alt_1 \ldots \Alt_{k\ell} \hat f
\in P_n \backslash \Id(\rho)$$
satisfies all the conditions of Lemma~\ref{LemmaAlt}.
Here $u_i$ are some variables from $\lbrace x_1, x_2, \ldots, x_n \rbrace$.
\end{proof}

\begin{lemma}\label{LemmaCochar} Let
 $k, n_0,\ell$ be the numbers from
Lemma~\ref{LemmaAlt}.   Then for every $n \geqslant n_0$ there exists
a partition $\lambda = (\lambda_1, \ldots, \lambda_s) \vdash n$,
$\lambda_i \geqslant k\ell-C$ for every $1 \leqslant i \leqslant d$,
with $m(\rho, \lambda) \ne 0$.
Here $C = p((\dim A)p + 3)((\dim\rho(L))-d)$.
\end{lemma}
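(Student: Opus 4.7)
The plan is to exploit the alternating polynomial $f \in P_n \setminus \Id(\rho)$ produced by Lemma~\ref{LemmaAlt} to pin down the shape of an admissible partition~$\lambda$. The key observation is that the $k\ell$ disjoint alternating sets $X_1, \ldots, X_{k\ell}$ of size~$d$ can be packaged as the columns of a suitable Young tableau whose associated column antisymmetrizer preserves $f$ up to a nonzero scalar.

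Concretely, I would set $\mu = (n - (d-1)k\ell,\, k\ell,\, k\ell,\, \ldots,\, k\ell) \vdash n$ with exactly $d$ parts, so that the transpose $\mu'$ consists of $k\ell$ copies of $d$ followed by $n - dk\ell$ copies of $1$. Fix a Young tableau $T_\mu$ whose first $k\ell$ columns are the sets $X_1, \ldots, X_{k\ell}$, with the remaining $n - dk\ell$ variables placed in the length-one "tail" columns. The column antisymmetrizer then factors as $b_{T_\mu} = \prod_{j=1}^{k\ell} b_{X_j}$ (the length-one columns contributing trivially). Since $f$ is alternating on each $X_j$, we have $b_{X_j} f = d! \, f$, and therefore $b_{T_\mu} f = (d!)^{k\ell} f \notin \Id(\rho)$.

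Next, decompose $P_n / (P_n \cap \Id(\rho)) = \bigoplus_\lambda m(\rho, \lambda) M(\lambda)$. Since the class of $b_{T_\mu} f$ is nonzero, there exists some $\lambda \vdash n$ with $m(\rho, \lambda) \ne 0$ and $b_{T_\mu} M(\lambda) \ne 0$. By the classical characterization of non-vanishing column antisymmetrizers, $b_{T_\mu} M(\lambda) \ne 0$ is equivalent to $\lambda$ being dominated by $\mu$, i.e.\ $\sum_{i=1}^k \lambda_i \leq \sum_{i=1}^k \mu_i$ for every $k$. Taking $k = d-1$ gives $\sum_{i=1}^{d-1} \lambda_i \leq n - k\ell$. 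On the other hand, Lemma~\ref{LemmaUpper} applied to such $\lambda$ yields $\lambda_{d+1} < p((\dim A)p + 3)$ and $\lambda_{\dim\rho(L)+1} = 0$, whence $\sum_{i > d} \lambda_i < C$ and $\sum_{i=1}^d \lambda_i > n - C$. Subtracting these two inequalities gives $\lambda_d > k\ell - C$; since $\lambda$ is weakly decreasing, the same lower bound holds for every $\lambda_i$ with $1 \leq i \leq d$.

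The main obstacle is invoking the dominance-order description of $\{\lambda : b_{T_\mu} M(\lambda) \ne 0\}$; although this is classical in $S_n$-representation theory, it must be applied carefully to the non-rectangular shape $\mu$ whose rightmost columns have height only~$1$. A secondary technical point is placing the $n - dk\ell$ "extra" variables of $f$ that lie outside the alternating sets into the tableau without disturbing the alternation structure, which is precisely why we devote the height-one tail columns to them---on those columns $b_{T_\mu}$ acts as the identity.
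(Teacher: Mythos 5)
Your argument is correct, and it reaches the key inequality $\sum_{i=1}^{d-1}\lambda_i \leq n - k\ell$ by a genuinely different route from the paper. The paper decomposes $FS_n = \bigoplus_{\lambda, T_\lambda} FS_n e^*_{T_\lambda}$ over standard tableaux, extracts one $T_\lambda$ with $e^*_{T_\lambda} f \notin \Id(\rho)$, and then bounds the rows of that specific $T_\lambda$ by direct combinatorics: since $e^*_{T_\lambda} = b_{T_\lambda} a_{T_\lambda}$ and $f$ alternates on each $X_i$, the row symmetrizer $a_{T_\lambda}$ kills $f$ unless no row of $T_\lambda$ contains two variables from the same $X_i$, which immediately bounds $\sum_{i=1}^{d-1}\lambda_i$ by $(d-1)k\ell + (n-dk\ell)$. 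You instead fix the auxiliary shape $\mu = (n-(d-1)k\ell,\, k\ell, \ldots, k\ell)$, place the alternating sets in its height-$d$ columns so that $b_{T_\mu} f = (d!)^{k\ell} f$, and then invoke the standard dominance criterion $b_{T_\mu} M(\lambda) \ne 0 \Rightarrow \lambda \trianglelefteq \mu$, reading off the same inequality at level $d-1$. Both proofs then finish identically with Lemma~\ref{LemmaUpper}. Your route is shorter and more conceptual but outsources work to the dominance lemma for column antisymmetrizers; the paper's is self-contained and in effect re-derives by hand exactly the special case of that lemma it needs. Your closing caveat about $\mu$ being non-rectangular is unfounded: the implication $b_{T_\mu} M(\lambda) \ne 0 \Rightarrow \lambda \trianglelefteq \mu$ holds for every partition $\mu$ and every tableau $T_\mu$ of that shape (it is tableau-independent because all $b_{T_\mu}$ of a given shape are conjugate), so nothing beyond your observation that the height-one tail columns contribute trivially to $b_{T_\mu}$ is required.
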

\begin{proof}
Consider the polynomial $f$ from Lemma~\ref{LemmaAlt}.
It is sufficient to prove that $e^*_{T_\lambda} f \notin \Id(\rho)$
for some tableau $T_\lambda$ of a desired shape $\lambda$.
It is known that $FS_n = \bigoplus_{\lambda,T_\lambda} FS_n e^{*}_{T_\lambda}$ where the summation
runs over the set of all standard tableax $T_\lambda$,
$\lambda \vdash n$. Thus $FS_n f = \sum_{\lambda,T_\lambda} FS_n e^{*}_{T_\lambda}f
\not\subseteq \Id(\rho)$ and $e^{*}_{T_\lambda} f \notin \Id(\rho)$ for some $\lambda \vdash n$.
We claim that $\lambda$ is of a desired shape.
It is sufficient to prove that
$\lambda_d \geqslant k\ell-C$, since
$\lambda_i \geqslant \lambda_d$ for every $1 \leqslant i \leqslant d$.
Each row of $T_\lambda$ includes numbers
of no more than one variable from each $X_i$,
since $e^{*}_{T_\lambda} = b_{T_\lambda} a_{T_\lambda}$
and $a_{T_\lambda}$ is symmetrizing the variables of each row.
Thus $\sum_{i=1}^{d-1} \lambda_i \leqslant \ell k(d-1) + (n-\ell kd) = n-\ell k$.
In virtue of Lemma~\ref{LemmaUpper},
$\sum_{i=1}^d \lambda_i \geqslant n-C$. Therefore
$\lambda_d \geqslant \ell k-C$.
\end{proof}

\begin{proof}[Proof of Theorem~\ref{TheoremMain}]
The Young diagram~$D_\lambda$ from Lemma~\ref{LemmaCochar} contains
the rectangular subdiagram~$D_\mu$, $\mu=(\underbrace{k\ell-C, \ldots, k\ell-C}_d)$.
The branching rule for $S_n$ implies that if we consider a restriction of
$S_n$-action on $M(\lambda)$ to $S_{n-1}$, then
$M(\lambda)$ becomes the direct sum of all non-isomorphic
$FS_{n-1}$-modules $M(\nu)$, $\nu \vdash (n-1)$, where each $D_\nu$ is obtained
from $D_\lambda$ by deleting one box. In particular,
$\dim M(\nu) \leqslant \dim M(\lambda)$.
Applying the rule $(n-d(k\ell-C))$ times, we obtain $\dim M(\mu) \leqslant \dim M(\lambda)$.
By the hook formula, $\dim M(\mu) = \frac{(d(k\ell-C))!}{\prod_{i,j} h_{ij}}$
where $h_{ij}$ is the length of the hook with edge in $(i, j)$.
By Stirling formula,
$$\dim M(\mu) \geqslant \frac{(d(k\ell-C))!}{((k\ell-C+d)!)^d}
\sim \frac{
\sqrt{2\pi d(k\ell-C)} \left(\frac{d(k\ell-C)}{e}\right)^{d(k\ell-C)}
}
{
\left(\sqrt{2\pi (k\ell-C+d)}
\left(\frac{k\ell-C+d}{e}\right)^{k\ell-C+d}\right)^d
} \sim C_5 \ell^{r_5} d^{dk\ell}$$ as $\ell \to \infty$
for some constants $C_5 > 0$, $r_5 \in \mathbb Q$.
Since $\ell = \left[\frac{n-n_0}{kd}\right]$,
this gives the lower bound.
The upper bound has been proved in Theorem~\ref{TheoremUpper}.
\end{proof}

\section{Examples}

\begin{example}
Let $\rho \colon L \to \mathfrak{gl}(V)$ be a finite dimensional
representation of a Lie algebra $L$
over a field $F$.
If $\rho(a)$ is a nilpotent operator for any $a \in L$,
then $c_n(\rho)=0$ for $n \geqslant \dim V$.
Conversely, if $c_n(\rho)=0$ for some $n\in\mathbb N$, then $\rho(L)$
generates a nilpotent associative subalgebra $A$ in
$\End_F(V)$.
\end{example}
\begin{proof}
Suppose $L$ acts on $V$ by nilpotent operators.
One of the variants of the Engel theorem~\cite[Corollary from Theorem~3.3]{Humphreys}
implies that there exists a basis in $V$ in which
all the operators $\rho(a)$, $a \in L$,
have strictly upper triangular matrices. The associative algebra
of strictly upper triangular matrices is nilpotent
and $\rho(a_1)\rho(a_2)\ldots \rho(a_n)=0$ in $\End_F(V)$
for all $a_i \in L$ and $n \geqslant \dim V$. Thus $c_n(\rho)=0$.

Suppose $c_n(\rho)=0$ for some $n\in\mathbb N$. Then
$\rho(a_1)\rho(a_2)\ldots \rho(a_n)=0$ for all $a_i \in L$.
 Hence $A^n=0$.
\end{proof}

\begin{example}\label{ExampleIrr}
Let $\rho \colon L \to \mathfrak{gl}(V)$ be a finite dimensional
irreducible representation of a Lie algebra $L$
over an algebraically closed field $F$ of characteristic $0$. Then
there exist constants $C > 0$, $r \in \mathbb R$
such that $$C n^r (\dim \rho(L))^n \leqslant c_n(\rho) \leqslant (\dim V)^2(\dim \rho(L))^n \text{ for all } n \in \mathbb N.$$
\end{example}
\begin{proof}
Consider polynomials as $n$-linear maps from $\rho(L)$ to $\End_F V$.
Then we have the natural map $P_n \to \Hom_{F}((\rho(L))^{{}\otimes n}; \End_F V)$
with the kernel $P_n \cap \Id(\rho)$
that leads to the embedding $$\frac{P_n}{P_n \cap \Id(\rho)}
\hookrightarrow \Hom_{F}((\rho(L))^{{}\otimes n}; \End_F V).$$
Thus $$c_n(\rho)=\dim \left(\frac{P_n}{P_n \cap \Id(\rho)}\right)
\leqslant \dim\Hom_{F}((\rho(L))^{{}\otimes n}; \End_F V)=
(\dim V)^2(\dim \rho(L))^n$$
and we obtain the upper bound. Hence $d(\rho) \leqslant \dim \rho(L)$.
Since $V$ is an irreducible $L$-module, $V$ is an irreducible $A$-module.
The density theorem~\cite[Section 4.3]{Jacobson} implies $$A=\End_F(V) \cong M_k(F),\qquad k=\dim V.$$
Let $I=\langle e_{11}, e_{21}, \ldots, e_{k1} \rangle \subseteq A$
and $J=0$. Then $I$ is a minimal left ideal of $A$ and $I/J$ is a
faithful irreducible $A$-module. Thus $\Ann(I/J)=0$
and $$\dim(\rho(L)/(\rho(L)\cap\Ann(I/J)))=\dim(\rho(L)).$$
Hence $d(\rho) \geqslant \dim(\rho(L))$. This yields the lower bound.
\end{proof}

\begin{example}\label{ExampleComplReduc}
Let $\rho \colon L \to \mathfrak{gl}(V)$ be a finite dimensional
completely reducible representation of a Lie algebra $L$
over an algebraically closed field $F$ of characteristic $0$,
i.e.
$$V = \bigoplus_{i=1}^s \bigoplus_{j=1}^{k_i} V_{ij}$$
where each $V_{ij}$ is irreducible and
 $V_{ij}\cong V_{kl}$ if and only if $i=k$. Denote
$\rho_{ij} \colon L \to \mathfrak{gl}(V_{ij})$,
$\rho_{ij}(a)=\rho(a)\bigl|_{V_{ij}}$, $1 \leqslant i \leqslant s$,
$1 \leqslant j \leqslant k_i$,
$a \in L$. Denote $$d=\max_{\substack{1 \leqslant i \leqslant s,
\\ 1 \leqslant j \leqslant k_i}} \left( \dim \rho_{ij}(L) \right).$$ Then
there exist constants $C_1, C_2 > 0$, $r_1, r_2 \in \mathbb R$
such that $$C_1 n^{r_1} d^n \leqslant c_n(\rho) \leqslant C_2 n^{r_2} d^n \text{ for all } n \in \mathbb N.$$
\end{example}
\begin{proof}
Let $A$ be the associative subalgebra of $\End_F(V)$
generated by $\rho(L)$. Note that
$$\Delta=\End_A(V)\cong M_{k_1}(F)\oplus \ldots \oplus M_{k_s}(F).$$
Indeed, denote by $\varphi_{ij}$  the isomorphism $V_{ij}\to V_{i1}$
and by $p_{ij}$ the projection $V \to V_{ij}$.
Let $\psi \in \Delta$.
Then by Schur's lemma $\varphi_{k\ell}p_{k\ell}\psi\varphi_{ij}^{-1}
\colon V_{i1} \to V_{k1}$
is a scalar operator for $i=k$  and is a zero operator
for $i \ne k$. Define $\alpha^{(i)}_{\ell j}(\psi)
\in F$ by the formula
$\varphi_{i\ell}p_{i\ell}\psi\varphi_{ij}^{-1}(a)=\alpha^{(i)}_{\ell j}(\psi)a$
for all $a \in V_{i1}$.
Then $\Phi \colon \Delta \to M_{k_1}(F)\oplus \ldots \oplus M_{k_s}(F)$,
$\Phi(\psi)=\bigl((\alpha^{(1)}_{\ell j}(\psi)), (\alpha^{(2)}_{\ell j}(\psi)),
\ldots, (\alpha^{(s)}_{\ell j}(\psi))\bigr)$, is an isomorphism
and $$\Phi^{-1}\bigl((\beta^{(1)}_{\ell j}), (\beta^{(2)}_{\ell j}),
\ldots, (\beta^{(s)}_{\ell j})\bigr)
= \sum_{i=1}^{s}\sum_{j,\ell=1}^{k_i}
 \beta^{(i)}_{\ell j} \varphi^{-1}_{i\ell}\varphi_{ij}p_{ij}.$$
 Here $(\beta^{(i)}_{\ell j}) \in M_{k_i}(F)$ are arbitrary matrices.

 By the density theorem~\cite[Section 4.3]{Jacobson},
 $A=\End_\Delta(V)$.
Thus $$A=\End_\Delta(V)\cong \End_F(V_{11}) \oplus \End_F(V_{21})
\oplus \ldots \oplus \End_F(V_{s1})$$
where $(\psi_1, \ldots, \psi_s)$ acts on $V_{ij}$ by the
operator $\varphi^{-1}_{ij}\psi_i\varphi_{ij}$. Indeed, every operator $\psi \in \End_\Delta(V)$
must commute with $p_{ij} \in \Delta$. Hence $V_{ij}$ are invariant
under $\psi$. Moreover, $\psi$ must commute with $\varphi^{-1}_{ij}\varphi_{i\ell} p_{i\ell}$.
Therefore, $$\psi(\varphi^{-1}_{ij}\varphi_{i\ell}p_{i\ell}) = (\varphi^{-1}_{ij}\varphi_{i\ell} p_{i\ell})\psi,$$
$$\varphi_{ij}(\psi \varphi^{-1}_{ij}\varphi_{i\ell} p_{i\ell})
\varphi_{i\ell}^{-1} = \varphi_{ij}(\varphi^{-1}_{ij}\varphi_{i\ell} p_{i\ell}\psi)\varphi_{i\ell}^{-1}$$
 and
$\varphi_{ij} \psi \varphi^{-1}_{ij} = \varphi_{i\ell} \psi\varphi_{i\ell}^{-1}$
for all $1\leqslant i \leqslant s$, $1 \leqslant j,\ell \leqslant k_i$.
We obtain the isomorphism $$\Xi \colon A
\to \End_F(V_{11}) \oplus \End_F(V_{21})
\oplus \ldots \oplus \End_F(V_{s1}),$$
 $$\Xi(\psi) = (\psi\bigl|_{V_{11}}, \psi\bigl|_{V_{21}},
 \ldots, \psi\bigl|_{V_{s1}}).$$

 If $I$ is a left ideal in $A$,
 then $I=I_1\oplus\ldots\oplus I_s$ where $\Xi(I_i)$
 is a left ideal in $\End_{F}(V_{i1})$.
 Consider arbitrary left ideals $I_1, \ldots, I_r$,
 $J_1, \ldots, J_r$ of $A$ that satisfy Conditions 1--2.
 There exist numbers $1\leqslant i_1, \ldots, i_r \leqslant s$
 such that $I_k = \tilde I_k \oplus \hat I_k$,
 $J_k = \tilde J_k \oplus \hat I_k$ where $\tilde I_k$ and
 $\tilde J_k$ are left ideals of $\Xi^{-1}(\End_F(V_{i_k1}))$
 and $\hat I_k$ is a left ideal of
 $\Xi^{-1}\left(\bigoplus_{\ell \ne i_k}\End_F(V_{\ell 1})\right)$.
 Hence $\Ann(I_k/J_k)=\Xi^{-1}\left(\bigoplus_{\ell \ne i_k}\End_F(V_{\ell 1})\right)$.
 Thus
 $\Ann(I_k/J_k) \cap \rho(L) = \ker\rho_{i_k1}$
 and $\dim(\rho(L)/(\Ann(I_k/J_k) \cap \rho(L))) = \dim \rho_{i_k1}(L)$.
 Choose $A$-submodules $T_k$ such that $\tilde I_k = T_k \oplus \tilde J_k$.
 If $i_k \ne i_\ell$ for some $k$ and $\ell$, then
 $T_1 q_1 T_2 \ldots T_{r-1} q_{r-1} T_r = 0$
 for all  $q_1$, \ldots, $q_{r-1} \in A \cup \{1\}$.
 Thus $i_1 = \ldots = i_r$ and $d(\rho) = \max_{\substack{1 \leqslant i \leqslant s,\\ 1 \leqslant j \leqslant k_i}} \dim \rho_{ij}(L)$.
  By Theorem~\ref{TheoremMain} we obtain the bounds.
\end{proof}

\begin{example}
Let $\rho \colon L \to \mathfrak{gl}(V)$ be a non-trivial finite dimensional
 representation of a simple Lie algebra $L$
over an algebraically closed field $F$ of characteristic $0$.
Then there exist constants $C_1, C_2 > 0$, $r_1, r_2 \in \mathbb R$
such that $$C_1 n^{r_1} (\dim L)^n \leqslant c_n(\rho) \leqslant C_2 n^{r_2} (\dim L)^n \text{ for all } n \in \mathbb N.$$
\end{example}
\begin{proof} By the Weyl theorem, $\rho$ is completely reducible.
Let $\rho_i \colon L \to \mathfrak{gl}(V_i)$ be the corresponding
irreducible representations. Then Example~\ref{ExampleComplReduc}
implies that $$d(\rho)=\max_{1 \leqslant i \leqslant s}\dim \rho_i(L)=\dim L.$$
\end{proof}

\section{Acknowledgements}

I am grateful to Yuri Bahturin and Mikhail Kotchetov for helpful
discussions.

\end{document}